\documentclass[a4paper,oneside,11pt]{amsart}

\reversemarginpar

\usepackage[colorlinks,hyperindex,linkcolor=blue,urlcolor=black,pdftitle={Examples of cyclic polynomially bounded operators that are not similar to contractions, II},pdfauthor={Maria F.Gamal'}]
{hyperref}

\usepackage{amsmath}
\usepackage{amsfonts}
\usepackage{amssymb}
\usepackage{amsthm}

\usepackage[english]{babel}
\usepackage{enumerate}

\usepackage{graphicx}
\usepackage{color}

\usepackage[abbrev]{amsrefs}

\numberwithin{equation}{section}

\theoremstyle{plain}
\newtheorem{theorem}{Theorem}[section]
\newtheorem{lemma}[theorem]{Lemma}

\newtheorem{proposition}[theorem]{Proposition}

\theoremstyle{definition}
\newtheorem{remark}[theorem]{Remark}



\begin{document}

\title[Examples of cyclic polynomially bounded operators, II]{Examples of cyclic polynomially bounded operators that are not similar to contractions, II}

\author{Maria F. Gamal'}
\address{
 St. Petersburg Branch\\ V. A. Steklov Institute 
of Mathematics\\
 Russian Academy of Sciences\\ Fontanka 27, St. Petersburg\\ 
191023, Russia  
}
\email{gamal@pdmi.ras.ru}


\subjclass[2010]{ Primary 47A60; Secondary 47A65,  47A16, 47A20.}

\keywords{Polynomially bounded operator, similarity, contraction, 
unilateral shift, isometry, $C_0$-contraction, $C_0$-operator}


\begin{abstract}
The question if polynomially bounded operator is similar to a contraction was posed by Halmos and was answered in the negative by Pisier. His counterexample is an operator of infinite multiplicity, while all its restrictions 
on invariant subspaces of finite multiplicity are similar to contractions. 
In \cite{gam16}, cyclic  polynomially bounded operators which are not similar to contractions was 
constructed. The construction was based on a perturbation of the sequence of finite dimensional operators which is uniformly polynomially bounded,  but is not uniformly completely polynomially bounded, constructed by Pisier. 
In this paper, a  cyclic  polynomially bounded operator $T_0$ such that $T_0$ is not similar to a contraction and 
$\omega_a(T_0)=\mathbb O$, is constructed. Here $\omega_a(z)=\exp(a\frac{z+1}{z-1})$, $z\in\mathbb D$, $a>0$, and 
$\mathbb D$ is the open unit disk. To obtain such $T_0$, a slight modification of the construction from  \cite{gam16} is needed. 

\end{abstract} 

\maketitle

\section{Introduction}

Let $T$ be an operator on a Hilbert space.  
$T$ is called {\it polynomially bounded}, if there exists a constant $C$
such that $$\|p(T)\|\leq C\|p\|_\infty \text{ \ for every polynomial } p,$$ 
where $\|\cdot\|_\infty$ denotes the uniform norm on the open unit disk $\mathbb D$.

An operator $T$ is called a {\it contraction} if $\|T\|\leq 1$. Every contraction is polynomially bounded (von Neumann inequality, see, for example, {\cite[Proposition I.8.3]{sfbk}}).

The question if a  polynomially bounded 
operator is similar  
to a contraction was posed by Halmos \cite{halmos} and was answered in the negative 
by Pisier \cite{pisier}.  In particular, the sequence of finite dimensional operators which is uniformly polynomially bounded, but is not uniformly completely polynomially bounded was constructed in \cite{pisier} (see also \cite{dp}; see \cite{paulsen},  \cite{pisier}, or  \cite{dp} for the definition of  completely polynomially boundedness). 

In {\cite[Theorem 6.3]{gam16}}, a perturbation of the sequence described above  was constructed.  The obtained perturbed sequence was used to construct examples of  cyclic  polynomially bounded operators which are not similar to contractions 
({\cite[Theorem 7.1 and Sec. 2]{gam16}}). In this paper, a slight modification  of {\cite[Theorem 7.1]{gam16}} 
is given which allows to construct a cyclic  absolutely continuous polynomially bounded operator $T_0$ such that $T_0$ is not similar to a contraction and 
$\omega_a(T_0)=\mathbb O$, where $a>0$ and $\omega_a(z)=\exp(a\frac{z+1}{z-1})$ ($z\in\mathbb D$) 
(Theorem~\ref{example}). 

We need the following notations and definitions. 

Let $\mathcal H$ be a (complex, separable) Hilbert space, and let $\mathcal M$ be its
(linear, closed)  subspace.
By $I_{\mathcal H}$ and $P_{\mathcal M}$ the identical operator on $\mathcal H$ and 
the orthogonal projection from $\mathcal H$ onto $\mathcal M$ are denoted, respectively.  

For an operator  $T\colon \mathcal H\to \mathcal H$, a subspace $\mathcal M$  of $\mathcal H$ is called \emph{invariant subspace} of $T$, if $T\mathcal M\subset\mathcal M$. The complete lattice of all invariant  subspaces of $T$ is denoted by  $\operatorname{Lat}T$.

The {\it multiplicity} $\mu_T$ of an operator  $T\colon \mathcal H\to \mathcal H$ 
 is the minimum dimension of its reproducing subspaces: 
$$  \mu_T=\min\{\dim E: E\subset \mathcal H, \ \ 
\bigvee_{n=0}^\infty T^n E=\mathcal H \} .$$
An operator $T$ is called {\it cyclic}, if $\mu_T=1$. 

It is well known and easy to see that if $\mathcal M\in\operatorname{Lat}T$, then 
\begin{equation}\label{muorth} \mu_{P_{\mathcal M^\perp}T|_{\mathcal M^\perp}}\leq\mu_T\leq
  \mu_{T|_{\mathcal M}} + \mu_{P_{\mathcal M^\perp}T|_{\mathcal M^\perp}}  \end{equation}
(see, for example, {\cite[II.D.2.3.1]{nik}}).

Let $T$ and $R$ be operators on Hilbert spaces $\mathcal H$ and $\mathcal K$, 
respectively, and let
$X:\mathcal H\to\mathcal K$ be a (linear,  bounded) operator such that 
$X$ {\it intertwines} $T$ and $R$,
that is, $XT=RX$. If $X$ is unitary, then $T$ and $R$ 
are called {\it unitarily equivalent}, in notation:
$T\cong R$. If $X$ is invertible, that is, $X$ has the {\it bounded} 
inverse $X^{-1}$, 
then $T$ and $R$ 
are called {\it similar}, in notation: $T\approx R$.
If $X$ is a {\it quasiaffinity}, that is, $\ker X=\{0\}$ and 
$\operatorname{clos}X\mathcal H=\mathcal K$, then
$T$ is called a {\it quasiaffine transform} of $R$, 
in notation: $T\prec R$. If $T\prec R$ and 
$R\prec T$,
 then $T$ and $R$ are called {\it quasisimilar}, 
in notation: $T\sim R$. 
If $\operatorname{clos}X\mathcal H=\mathcal K$, we write $T \buildrel d \over \prec R$. 
It  follows immediately from the definition of the relation $\buildrel d \over \prec$ that if $T \buildrel d \over \prec R$,
then $\mu_R\leq \mu_T$.

If $T$, $R$, and $X$ are operators such that $XT=RX$, and $\mathcal M\in\operatorname{Lat}T$, then 
$\operatorname{clos}X\mathcal M\in\operatorname{Lat}R$.
The mapping  
\begin{equation}\label{lattice}\mathcal J_X\colon\operatorname{Lat}T\to \operatorname{Lat}R, \   
\ \mathcal J_X\mathcal M=\operatorname{clos}X\mathcal M, \ \ \ \mathcal M\in\operatorname{Lat}T,\end{equation} 
is a lattice isomorphism if and only if it is a bijection, see  {\cite[VII.1.19]{berbook}}.

Symbols $\mathbb D$  and $\mathbb T$ denote the open unit disc  
and the unit circle, respectively. The normalized Lebesgue measure on $\mathbb T$ is denoted by $m$.  The symbol 
 $H^\infty$ denotes the Banach algebra of all bounded 
analytic functions in $\mathbb D$. The uniform norm on $\mathbb D$
is denoted by the symbol $\|\cdot\|_\infty$. The symbol $H^2$ denotes 
the Hardy space on $\mathbb D$, $L^2=L^2(\mathbb T,m)$, $H^2_-=L^2\ominus H^2$. The symbol $P_-$ denotes the orthogonal projection from $L^2$ onto $H^2_-$. 
 The simple unilateral and bilateral shifts $S$ and $U_{\mathbb T}$ are the operators
 of multiplication by the independent variable  on $H^2$ and on $L^2$, respectively. It is well known that $\mu_S=1$ and $\mu_{U_{\mathbb T}}=1$.
For an inner function  $\theta\in H^\infty$ the subspace $\theta H^2$
is invariant for $S$, put 
\begin{equation}\label{1.7}\mathcal K_\theta=H^2\ominus\theta H^2 \ \ \text{ and } \ \ 
T_\theta = P_{\mathcal K_\theta}S|_{\mathcal K_\theta}.  \end{equation}
By \eqref{muorth}, $\mu_{T_\theta}=1$.

If $T$ is a polynomially bounded operator, then $T=T_a\dotplus T_s$,
where $T_a$ is an  \emph{absolutely continuous (a.c.)}  polynomially bounded operator, 
that is, $H^\infty$-functional calculus is well defined for $T_a$, and 
$T_s$ is similar to a singular unitary operator, see \cite{mlak} or \cite{ker16}. (Although many results on polynomially 
bounded operators that will be used in the present paper were originally  proved by Mlak, we will refer to \cite{ker16} for the convenience of references.) 
In the paper, absolutely continuous polynomially bounded operators 
are considered.  Clearly,  $S$ and $U_{\mathbb T}$ are contraction, and, consequently, polynomially bounded. It is well known that $S$ and $U_{\mathbb T}$ are a.c..
An a.c. polynomially bounded operator $T$ is called a
{\it $C_0$-operator}, if there exists $\varphi\in H^\infty$ such that 
$\varphi(T)=\mathbb O$, see \cite{bercpr}; if a $C_0$-operator is a contraction, it
is called a {\it $C_0$-contraction}, see \cite{sfbk} and \cite{berbook}. For an inner function $\theta$, 
$T_\theta$ is a $C_0$-contraction, because $\theta(T_\theta)=\mathbb O$.  

The paper is organized as follows. In Sec. 2  polynomially bounded operators that are quasiaffine transforms of the unilateral shift $S$ are considered. In many results is assumed that the operator $T$ under consideration has an invariant subspace $\mathcal M$  such that $T|_{\mathcal M}\approx S$. If $T\prec S$ and $T$ is a contraction, then $T$ 
always has such  invariant subspace  (a particular case of \cite{ker07}, see also {\cite[Theorem IX.3.5]{sfbk}}). The author does not know if this result can be generalized on polynomially bounded operators. In Sec. 3
Theorem \ref{theorem1} is applied to construct the  polynomially bounded operator $T_0$ such that $T_0$ is not similar to a contraction and 
$\omega_a(T_0)=\mathbb O$, where $a>0$ and $\omega_a(z)=\exp(a\frac{z+1}{z-1})$ ($z\in\mathbb D$). (All subsequent results from Sec. 2 are not used in Sec. 3.) 


\section{On operators that are quasiaffine transforms of a unilateral shift}

An  operator $T$ is called  \emph{power bounded}, if $\sup_{n\geq 0}\|T^n\| < \infty$.
For a power bounded operator $T$ the  \emph{isometric asymptote} $(Y,V)$  is defined in \cite{ker89}.  Here $V$ is an isometry, and $Y$ is an intertwining mapping: $YT=VY$. 
When it will be convenient, we will called by the isometric asymptote of $T$ the isometry $V$ itself, while $Y$ will be called 
the  \emph{canonical intertwining mapping} (sf. \cite{ker89} and \cite{ker16}).

Let $\mathcal M\in\operatorname{Lat}T$.  It follows immediately from  the construction of the isometric asymptote and {\cite[Theorem 1]{ker89}} that   $T|_{\mathcal M}$ is similar to an isometry if and only if $Y|_{\mathcal M}$ is left invertible, i.e., there exists $c>0$ such that $\|Yx\|\geq c\|x\|$ for every $x\in\mathcal M$. The  \emph{unitary asymptote} for a power bounded operator $T$ is the minimal unitary extension of  its isometric asymptote, see \cite{ker89}.   

Clearly, a polynomially bounded operator is power bounded. The unitary asymptotes for polynomially bounded operators are considered in \cite{ker16}.
 
In the following proposition, the idea from \cite{fer} is used. Since this proposition can be easily checked directly, its proof is omitted.

\begin{proposition}\label{propa1} Suppose that  $T\colon\mathcal H\to\mathcal H$, $V\colon\mathcal K\to\mathcal K$,  and $Y\colon\mathcal H\to\mathcal K$  are operators, and $YT=VY$. Furthermore, suppose that  $\mathcal M\in\operatorname{Lat}T$ is such that  $Y|_{\mathcal M}$ is left invertible. Put $\mathcal H_0=\mathcal H\ominus\mathcal M$, $T_0=P_{\mathcal H_0} T|_{\mathcal H_0}$,  $\mathcal K_1=Y\mathcal M$,  $\mathcal K_0=\mathcal K \ominus\mathcal K_1$, $Y_2=P_{\mathcal K_1} Y|_{\mathcal H_0}$, $Y_0=P_{\mathcal K_0}Y|_{\mathcal H_0}$,  $V_1=V|_{\mathcal K_1}$, $V_2=P_{\mathcal K_1} V|_{\mathcal K_0}$, $V_0=P_{\mathcal K_0}V|_{\mathcal K_0}$,  
$$Z\colon\mathcal M\oplus\mathcal H_0\to\mathcal K_1\oplus\mathcal H_0, \ \  \ \ 
Z=\begin{pmatrix} Y|_{\mathcal M} & Y_2 \cr \mathbb O & I_{\mathcal H_0}\end{pmatrix},$$  and $$  R\colon\mathcal K_1\oplus\mathcal H_0\to\mathcal K_1\oplus\mathcal H_0, \ \  \ \ 
R= \begin{pmatrix} V_1 & V_2Y_0 \cr \mathbb O & T_0\end{pmatrix}.$$
Then $Y_0T_0=V_0Y_0$, $Z$ is invertible, $R=ZTZ^{-1}$, and $$(I_{\mathcal K_1}\oplus Y_0) R=V (I_{\mathcal K_1}\oplus Y_0) .$$ 
Moreover, if  $\operatorname{clos}Y \mathcal H=\mathcal K$, then $\operatorname{clos}Y_0 \mathcal H_0=\mathcal K_0$,  and if $\ker Y=\{0\}$, then $\ker Y_0=\{0\}$. 
\end{proposition}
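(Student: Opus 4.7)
The plan is to write out the block matrix forms of $T$, $V$, and $Y$ with respect to the orthogonal decompositions $\mathcal H = \mathcal M \oplus \mathcal H_0$ and $\mathcal K = \mathcal K_1 \oplus \mathcal K_0$, and then verify the six assertions by direct block matrix manipulation.

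Before doing so I would record two preliminary observations. First, since $Y|_{\mathcal M}$ is bounded below, its range $\mathcal K_1=Y\mathcal M$ is automatically closed, so $\mathcal K_0$ is a well defined subspace. Second, the intertwining relation $YT=VY$ together with $T\mathcal M\subset\mathcal M$ yields $V(Y\mathcal M)=Y(T\mathcal M)\subset Y\mathcal M$, so $\mathcal K_1\in\operatorname{Lat}V$. Consequently $T$ has upper triangular form with respect to $\mathcal M\oplus\mathcal H_0$ and $V$ has upper triangular form with respect to $\mathcal K_1\oplus\mathcal K_0$, while $Y$ takes the block form
$$Y=\begin{pmatrix} Y|_{\mathcal M} & Y_2 \\ \mathbb O & Y_0\end{pmatrix},$$
the lower-left zero coming from $Y\mathcal M\subset\mathcal K_1$.

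Next I would expand $YT=VY$ into its four block-entry identities. The $(2,2)$ entry immediately gives $Y_0T_0=V_0Y_0$, and the $(1,2)$ entry gives the identity
$$Y|_{\mathcal M}\, P_{\mathcal M} T|_{\mathcal H_0}+Y_2T_0 = V_1Y_2+V_2Y_0,$$
which is precisely what is needed to recognize $ZT$ as $RZ$. For the invertibility of $Z$, I note that $Y|_{\mathcal M}\colon\mathcal M\to\mathcal K_1$ is both bounded below and surjective (by the very definition of $\mathcal K_1$), hence boundedly invertible, and an explicit inverse of the upper triangular $Z$ is
$$Z^{-1}=\begin{pmatrix} (Y|_{\mathcal M})^{-1} & -(Y|_{\mathcal M})^{-1}Y_2 \\ \mathbb O & I_{\mathcal H_0}\end{pmatrix}.$$
The identity $R=ZTZ^{-1}$ is then checked by computing $ZT$ and $RZ$ separately and observing that the two displayed intertwining identities make them agree. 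The formula $(I_{\mathcal K_1}\oplus Y_0)R=V(I_{\mathcal K_1}\oplus Y_0)$ is a similar block computation, using $Y_0T_0=V_0Y_0$ once more.

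Finally, the density and injectivity statements follow from the block form of $Y$: every $h\in\mathcal H$ decomposes as $h=m+h_0$ with $m\in\mathcal M$, $h_0\in\mathcal H_0$, and then $P_{\mathcal K_0}Yh=Y_0h_0$, so density of $Y\mathcal H$ in $\mathcal K$ forces density of $Y_0\mathcal H_0$ in $\mathcal K_0$; likewise, if $Y_0h_0=0$ then $Yh_0=Y_2h_0\in\mathcal K_1=Y\mathcal M$, so $Yh_0=Ym$ for some $m\in\mathcal M$, and $\ker Y=\{0\}$ combined with $\mathcal M\cap\mathcal H_0=\{0\}$ gives $h_0=0$. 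The whole proof is routine: the only genuine point of care is the preliminary verification that $\mathcal K_1$ is closed and $V$-invariant, without which none of the block decompositions make sense.
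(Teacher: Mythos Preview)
Your proof is correct and is precisely the routine block-matrix verification that the paper has in mind: the paper in fact omits the proof entirely, stating only that the proposition ``can be easily checked directly.'' Your two preliminary observations (closedness of $\mathcal K_1$ and its $V$-invariance) are the right points of care, and the rest of your argument is an accurate expansion of that direct check.
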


\begin{lemma}\label{lemomega} Suppose  that $C>0$,  $g$, $\theta$, $\omega\in H^\infty$, $\theta$ is inner, 
and \begin{equation}\label{estp}\|P_-\overline\theta\omega u\|\leq C\|gu\| \ \ \text{ for every  } \ u\in H^2.\end{equation}
Furthermore, suppose   that $\mathcal H_0$ is a Hilbert space, and 
$X_0\colon\mathcal K_\theta\to\mathcal H_0$ is an operator. Put
\begin{equation}\label{defx}X\colon H^2\to\theta H^2\oplus\mathcal H_0, \ \  \ \  
X=\begin{pmatrix} g(S)|_{\theta H^2} & P_{\theta H^2}g(S)|_{\mathcal K_\theta} \cr\mathbb O & X_0\end{pmatrix}.\end{equation}
Then there exists $C_1>0$ (which depends on $C$, $\|g\|_\infty$, $\|\omega\|_\infty$, and $\|X_0\|$) such that 
$$ \|X\omega u\|\leq C_1\|gu\| \ \ \  \text{ for every } u\in H^2.$$
\end{lemma}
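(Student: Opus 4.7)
My strategy is to evaluate $X(\omega u)$ explicitly via the decomposition $H^2=\theta H^2\oplus\mathcal K_\theta$ and then to bound the two resulting components separately, using the hypothesis \eqref{estp} only for the $\mathcal K_\theta$-component. Writing $\omega u=h_1+h_2$ with $h_1=P_{\theta H^2}(\omega u)$ and $h_2=P_{\mathcal K_\theta}(\omega u)$, and using that $g(S)$ leaves $\theta H^2$ invariant (so $gh_1\in\theta H^2$ already), a direct computation with the block form \eqref{defx} gives
$$X(\omega u)=P_{\theta H^2}(g\omega u)\,\oplus\,X_0P_{\mathcal K_\theta}(\omega u).$$
By Pythagoras it therefore suffices to bound each summand by a constant multiple of $\|gu\|$.

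The first summand is the easy one: since $\omega\in H^\infty$, one has $\|P_{\theta H^2}(g\omega u)\|\le\|g\omega u\|=\|\omega\cdot gu\|\le\|\omega\|_\infty\|gu\|$.

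For the second summand, the crucial point is the model-space identity
$$\|P_{\mathcal K_\theta}h\|=\|P_-\overline\theta h\|\qquad(h\in H^2),$$
which I will verify by observing that multiplication by $\overline\theta$ is unitary on $L^2$, sends $\theta H^2$ bijectively onto $H^2$, and sends $\mathcal K_\theta$ isometrically into $H^2_-$; splitting $h=h_1+h_2$ accordingly then gives $P_-\overline\theta h=\overline\theta h_2$ and hence the stated equality. Applying this identity to $h=\omega u$ and combining with the hypothesis \eqref{estp} yields $\|P_{\mathcal K_\theta}(\omega u)\|\le C\|gu\|$, whence $\|X_0P_{\mathcal K_\theta}(\omega u)\|\le C\|X_0\|\,\|gu\|$.

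Combining the two bounds produces the desired inequality with $C_1=\bigl(\|\omega\|_\infty^2+C^2\|X_0\|^2\bigr)^{1/2}$. I do not anticipate any real obstacle; the lemma is essentially a bookkeeping exercise, and the only insight needed is the identification $\|P_{\mathcal K_\theta}h\|=\|P_-\overline\theta h\|$, which serves as the bridge between the $P_-\overline\theta$ appearing in \eqref{estp} and the $P_{\mathcal K_\theta}$-component produced by the block form of $X$.
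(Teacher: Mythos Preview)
Your proof is correct and follows essentially the same approach as the paper: decompose $\omega u$ along $\theta H^2\oplus\mathcal K_\theta$, apply the block form of $X$, and use the key identity $\|P_{\mathcal K_\theta}\omega u\|=\|P_-\overline\theta\omega u\|$ together with \eqref{estp}. Your treatment of the $\theta H^2$-component is in fact slightly cleaner than the paper's: you observe directly that this component equals $P_{\theta H^2}(g\omega u)$ and bound it by $\|\omega\|_\infty\|gu\|$, whereas the paper rewrites it as $g\omega u-P_{\mathcal K_\theta}gP_{\mathcal K_\theta}\omega u$ and applies the triangle inequality, incurring an extra factor of $\|g\|_\infty$ in the constant.
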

\begin{proof} Let $u\in H^2$. We have  
$$X\omega u=(gP_{\theta H^2}\omega u+P_{\theta H^2}gP_{\mathcal K_\theta}\omega u)\oplus X_0 
P_{\mathcal K_\theta}\omega u.$$
Therefore, $$\|X\omega u\|^2 = \|gP_{\theta H^2}\omega u+P_{\theta H^2}gP_{\mathcal K_\theta}\omega u\|^2+\| X_0P_{\mathcal K_\theta}\omega u\|^2.$$
We have $gP_{\theta H^2}\omega u+P_{\theta H^2}gP_{\mathcal K_\theta}\omega u =
g\omega u-P_{\mathcal K_\theta}gP_{\mathcal K_\theta}\omega u$,
$$
\|P_{\mathcal K_\theta}gP_{\mathcal K_\theta}\omega u\|\leq\|g\|_\infty\|P_{\mathcal K_\theta}\omega u\|,\ \  
\text{ and }\ \ \|P_{\mathcal K_\theta}\omega u\|=\|\theta P_-\overline\theta\omega u\|=\|P_-\overline\theta\omega u\|.
$$
Therefore, 
 \begin{align*}\|gP_{\theta H^2}\omega u & +P_{\theta H^2}gP_{\mathcal K_\theta}\omega u\|^2\leq
(\|g\omega u\|+\|P_{\mathcal K_\theta}gP_{\mathcal K_\theta}\omega u\|)^2\\&\leq
2(\|g \omega u\|^2+\|P_{\mathcal K_\theta}gP_{\mathcal K_\theta}\omega u\|^2)\leq
2(\|\omega\|_\infty^2\|g u\|^2+\|g\|_\infty^2\|P_-\overline\theta\omega u\|^2).\end{align*}
Thus,
\begin{align*}\|X\omega u\|^2 & \leq  2(\|\omega\|_\infty^2\|g u\|^2+\|g\|_\infty^2\|P_-\overline\theta\omega u\|^2)+
\|X_0\|^2\|P_-\overline\theta\omega u\|^2\\&=
2\|\omega\|_\infty^2\|g u\|^2+(2\|g\|_\infty^2+\|X_0\|^2)\|P_-\overline\theta\omega u\|^2.\end{align*}
It remains to apply  \eqref{estp}.
\end{proof}

In the following lemma, only part (i) is used in the sequel.

\begin{lemma}\label{lemomegaf} Suppose   that $g$, $\theta$, $\omega\in H^\infty$, and $\theta$ is inner. 
\begin{enumerate}[\upshape (i)]

\item If there exist $\eta$, $h\in H^\infty$ such that 
\begin{equation}\label{eq23} \omega=g\eta+\theta h, \end{equation}
then there exists $C>0$ such that \eqref{estp} is fulfilled.

\item If $g$ is outer and \eqref{estp} is fulfilled with some $C>0$, then  there exist $\eta$, $h\in H^\infty$ such that 
\eqref{eq23} is fulfilled.\end{enumerate}\end{lemma}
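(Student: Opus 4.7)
Part~(i) is a routine calculation: substituting $\omega=g\eta+\theta h$ into $\overline\theta\omega u$ kills the $H^2$ term $hu$ under $P_-$, and the remainder is bounded by $\|P_-(\overline\theta g\eta u)\|\le\|g\eta u\|\le\|\eta\|_\infty\|gu\|$, so \eqref{estp} is satisfied with $C=\|\eta\|_\infty$.

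For part~(ii) the plan is to transfer \eqref{estp} into an operator inequality on the model space $\mathcal K_\theta$ and to conclude by Sarason's description of the commutant of $T_\theta$. First I would observe that $J_\theta\colon\mathcal K_\theta\to H^2_-$, $J_\theta k=\overline\theta k$, is an isometric embedding, and writing $\omega u=\theta v+\omega(T_\theta)P_{\mathcal K_\theta}u$ with $v\in H^2$ gives $P_-\overline\theta\omega u=J_\theta\,\omega(T_\theta)P_{\mathcal K_\theta}u$; thus \eqref{estp} is equivalent to $\|\omega(T_\theta)P_{\mathcal K_\theta}u\|\le C\|gu\|$ for every $u\in H^2$.

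The main technical step, which I expect to be the main obstacle, is to pass from this to the pointwise inequality $\|\omega(T_\theta)k\|\le C\|g(T_\theta)k\|$ on $\mathcal K_\theta$. I would fix $k\in\mathcal K_\theta$ and minimize $\|gu\|$ over $u=k+\theta v$, $v\in H^2$; the direct decomposition along $\mathcal K_\theta\oplus\theta H^2$ yields $\|gu\|^2=\|g(T_\theta)k\|^2+\|\overline\theta P_{\theta H^2}(gk)+gv\|^2$, and since $g$ is outer, $gH^2$ is dense in $H^2$, so the second summand has infimum zero. This is where the outerness of $g$ is used in an essential way, and without it the pointwise inequality simply fails.

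From the pointwise inequality, together with the fact that $g(T_\theta)$ is injective with dense range (also a consequence of $g$ being outer, via inner-outer factorization for injectivity and approximation of $k\in\mathcal K_\theta$ by $gu_n\to k$ and projecting for density), the formula $X(g(T_\theta)k):=\omega(T_\theta)k$ extends by density to a bounded operator $X$ on $\mathcal K_\theta$. One checks that $X$ commutes with $T_\theta$ using the density of the range of $g(T_\theta)$, and then Sarason's commutant theorem (see \cite{sfbk}) produces $\eta\in H^\infty$ with $X=\eta(T_\theta)$. Consequently $\omega(T_\theta)=(g\eta)(T_\theta)$, so $\omega-g\eta\in\theta H^\infty$, and writing $\omega-g\eta=\theta h$ with $h\in H^\infty$ yields \eqref{eq23}.
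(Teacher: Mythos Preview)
Your argument is correct; part~(i) is identical to the paper's, while part~(ii) follows a genuinely different route.

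The paper's proof of (ii) stays on the function-theory side: it introduces outer approximants $\psi_\delta$ to $1/g$, applies \eqref{estp} to $\psi_\delta u$ to obtain a uniform bound on the Hankel symbols $\overline\theta\omega\psi_\delta$, invokes Nehari's theorem to produce $\varphi_\delta\in H^\infty$ with $\|\overline\theta\omega\psi_\delta-\varphi_\delta\|_\infty$ uniformly bounded, checks that $g\varphi_\delta$ is also uniformly bounded, and then passes to a limit via the Vitali--Montel theorem to obtain $h$ and $\eta$.

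Your approach is operator-theoretic: you interpret \eqref{estp} on the model space $\mathcal K_\theta$, perform the minimization over the $\theta H^2$-coset (this is exactly where outerness of $g$ enters, and your computation $\|gu\|^2=\|g(T_\theta)k\|^2+\|\overline\theta P_{\theta H^2}(gk)+gv\|^2$ is correct), and then factor $\omega(T_\theta)$ through $g(T_\theta)$ to obtain a commutant element, finishing with the description of $\{T_\theta\}'$ (this is {\cite[Theorem X.2.10]{sfbk}}, already used elsewhere in the paper). Both proofs ultimately exploit the same density $\operatorname{clos} gH^2=H^2$; the paper packages it as $g\psi_\delta\to 1$ pointwise, you as $\inf_v\|w+gv\|=0$. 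Your route avoids the normal-families compactness step and ties in more naturally with the operator-theoretic framework of the surrounding sections, while the paper's Nehari argument makes the connection to Hankel operators explicit and yields the same conclusion without invoking the commutant structure of $T_\theta$.
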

\begin{proof}(i) Let $u\in H^2$. We have $P_-\overline\theta\omega u=P_-\overline\theta(g\eta+\theta h)u=
P_-\overline\theta g\eta u$. Therefore, $\|P_-\overline\theta\omega u\|\leq \|g\eta u\|\leq\|\eta\|_\infty\|gu\|$.

(ii) 
For $\delta>0$ define an outer function $\psi_\delta$ by the formula
$$|\psi_\delta(\zeta)|=\begin{cases}1/|g(\zeta)|, & \text{ if }  |g(\zeta)|\geq \delta, \\
1, & \text{ if }  |g(\zeta)|<\delta.\end{cases} $$
Since $g$ is outer, 
\begin{equation}\label{lem231} (g\psi_\delta)(z)\to 1 \ \ \text{ for every } z\in\mathbb D, \ \ \text{ when } \delta\to 0.
\end{equation}
Clearly, $\|g\psi_\delta\|_\infty\leq\max(\|g\|_\infty, 1)$. 
Set $C_1=\max(\|g\|_\infty, 1)$. Applying \eqref{estp} with 
$\psi_\delta u$ instead of $u$, we obtain that $$\sup_{u\in H^2, \|u\|\leq 1}\|P_-\overline\theta\omega\psi_\delta u\|\leq CC_1 \ \ \text{ for every }\delta>0.$$
By the Nehari theorem (see, for example, {\cite[Theorem I.B.1.3.2]{nik}}), there exist $\varphi_\delta\in H^\infty$ such that 
$$\|\overline\theta\omega\psi_\delta-\varphi_\delta\|_\infty\leq CC_1 \ \ \text{ for every }\delta>0.$$ 
We have \begin{align*}\|g\varphi_\delta\|_\infty=\|\theta g\varphi_\delta\|_\infty\leq\|\theta g\varphi_\delta- \omega g \psi_\delta\|_\infty
 +   \|\omega g \psi_\delta\|_\infty & \leq \|g\|_\infty CC_1+\|\omega\|_\infty C_1\\& \text{ for every }\delta>0.\end{align*}
By the Vitali--Montel theorem, there exist a sequence $\delta_j\to_j 0$ and $h$, $\eta\in H^\infty$ such that 
\begin{equation}\label{lem232}(g\varphi_{\delta_j})(z)\to_j h(z)\ \text{ and }\ (\omega\psi_{\delta_j}-\theta\varphi_{\delta_j})(z) \to_j \eta(z) \ \ 
\text{ for every }z\in\mathbb D.\end{equation}
It follows from \eqref{lem231} and \eqref{lem232} that $\omega-\theta h=g\eta$.
\end{proof}

\begin{theorem}\label{theorem1} Suppose that  $g$, $\eta$, $h$, $\theta$, $\omega\in H^\infty$, 
$\theta$ and $\omega$ are inner, and $\omega = g\eta+\theta h$. Furthermore, suppose  that 
$\mathcal H_0$ is a Hilbert space, $T_0\colon\mathcal H_0\to\mathcal H_0$, 
$X_0\colon\mathcal K_\theta\to\mathcal H_0$, 
$Y_0\colon\mathcal H_0\to\mathcal K_\theta$ are operators, 
$Y_0X_0= g(T_\theta)$, 
$X_0 T_\theta=T_0X_0$, $Y_0T_0=  T_\theta Y_0$.
 Define $X$ by \eqref{defx},
put $Y=I_{\theta H^2}\oplus Y_0$, 
\begin{equation}\label{deft}T\colon\theta H^2\oplus\mathcal H_0\to\theta H^2\oplus\mathcal H_0, \ \ \ \  
T=\begin{pmatrix} S|_{\theta H^2} & P_{\theta H^2}S|_{\mathcal K_\theta}Y_0 \cr\mathbb O & T_0\end{pmatrix},\end{equation}
 and $\mathcal M=\operatorname{clos}X\omega H^2$. 
Then $\mathcal M\in\operatorname{Lat}T$, $T|_{\mathcal M}\approx  S$, and $Y\mathcal M=\vartheta\omega H^2$, 
where $\vartheta$ is the inner factor of $g$. Moreover,  
\begin{equation}\label{xyt} X  S=TX, \ \ \ YT=  S Y, \ \text{ and }
\ \ YX=  g(S).\end{equation} \end{theorem}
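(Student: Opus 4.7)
The plan is to verify the three intertwining identities in \eqref{xyt} by direct block matrix computation, deduce $\mathcal M \in \operatorname{Lat}T$ immediately, then combine Lemmas \ref{lemomega} and \ref{lemomegaf}(i) with the identity $YX = g(S)$ to show that $Y|_{\mathcal M}$ is bounded below, and finally read off $T|_{\mathcal M} \approx S$ and the description of $Y\mathcal M$.

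For \eqref{xyt} I would use the compact form $Xf = P_{\theta H^2}g(S)f \oplus X_0 P_{\mathcal K_\theta}f$, which is equivalent to \eqref{defx} because $\theta H^2$ is invariant for $g(S)$. The same invariance also gives the decomposition $g(S)f = P_{\theta H^2}g(S)f + g(T_\theta)P_{\mathcal K_\theta}f$, and combined with $Y_0 X_0 = g(T_\theta)$ this immediately yields $YX = g(S)$. The remaining identities are routine: for $YT = SY$ one uses $Y_0 T_0 = T_\theta Y_0$ together with $Sw = P_{\theta H^2}Sw + T_\theta w$ for $w \in \mathcal K_\theta$; for $XS = TX$ one uses $P_{\mathcal K_\theta}Sf = T_\theta P_{\mathcal K_\theta}f$ (because $S$ preserves $\theta H^2$), $X_0 T_\theta = T_0 X_0$ and $Y_0 X_0 = g(T_\theta)$. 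Invariance of $\mathcal M$ follows from $TX\omega u = XS\omega u = X\omega(Su) \in X\omega H^2$ by continuity.

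The main technical step is a bounded-below estimate for $Y|_{\mathcal M}$. The factorization $\omega = g\eta + \theta h$ triggers Lemma \ref{lemomegaf}(i), producing $C > 0$ with $\|P_-\overline\theta\omega u\| \leq C\|gu\|$ for all $u \in H^2$; Lemma \ref{lemomega} then upgrades this to $\|X\omega u\| \leq C_1\|gu\|$. Independently, $YX = g(S)$ and $|\omega| = 1$ on $\mathbb T$ give $\|YX\omega u\| = \|g\omega u\| = \|gu\|$. Now if $m \in \mathcal M$ and $u_n \in H^2$ satisfy $X\omega u_n \to m$, then $\|X\omega u_n\| \to \|m\|$ and, by continuity of $Y$, $\|gu_n\| = \|YX\omega u_n\| \to \|Ym\|$, so passing to the limit in $\|X\omega u_n\| \leq C_1\|gu_n\|$ gives $\|m\| \leq C_1\|Ym\|$. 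Hence $Y|_{\mathcal M}$ is injective with closed range.

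Finally, $Y\mathcal M$ is a closed subspace of $H^2$ containing the dense subspace $YX\omega H^2 = g\omega H^2$ of $\vartheta\omega H^2$ (since for $g = \vartheta g_o$ with $g_o$ outer, $\operatorname{clos}(g\omega H^2) = \vartheta\omega H^2$), so $Y\mathcal M = \vartheta\omega H^2$. Thus $Y|_{\mathcal M}$ is a bounded invertible intertwiner between $T|_{\mathcal M}$ and $S|_{\vartheta\omega H^2}$, and the latter is unitarily equivalent to $S$ via $u \mapsto \vartheta\omega u$, giving $T|_{\mathcal M} \approx S$. The hardest point is the bounded-below estimate: without both the upper bound from Lemma \ref{lemomega} and the isometric identity $\|YX\omega u\| = \|gu\|$, one could not use $\|gu\|$ as a common scale linking $\|m\|$ and $\|Ym\|$.
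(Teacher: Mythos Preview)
Your proposal is correct and follows essentially the same route as the paper's own proof: verify \eqref{xyt} directly, deduce $\mathcal M\in\operatorname{Lat}T$, use Lemma~\ref{lemomegaf}(i) and Lemma~\ref{lemomega} together with $YX=g(S)$ and $|\omega|=1$ to get the bounded-below estimate $\|Ym\|\geq c\|m\|$ on $\mathcal M$, and then identify $Y\mathcal M=\vartheta\omega H^2$. Your write-up merely spells out in more detail the block computations and the limiting argument that the paper compresses into ``follows immediately'' and ``by continuity of $Y$''.
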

\begin{proof} The equalities \eqref{xyt} follow immediately from the definitions of $T$, $X$, and $Y$. The inclusion 
$\mathcal M\in\operatorname{Lat}T$ follows from the first equality in \eqref{xyt}. By Lemma~\ref{lemomegaf}, \eqref{estp} is fulfilled. 
We will show that there exists $c>0$ such that 
\begin{equation}\label{estxm} \|Yy\|\geq c\|y\| \ \ \text{ for every } y\in\mathcal M.\end{equation}
It is sufficient to prove  \eqref{estxm} for $y=X\omega u$, where $ u\in H^2$,
due to the definition of $\mathcal M$ and the continuity of $Y$. For such $y$, taking into account the third equality in \eqref{xyt} and the assumption that $\omega$ is inner,  the condition \eqref{estxm} can be rewritten as follows:
$$\|gu\| \geq c \|X\omega u\| \ \  \text{ for every } u\in H^2.$$
 This estimate is fulfilled by Lemma~\ref{lemomega}. Thus, \eqref{estxm} is proved. 

By \eqref{estxm}, the operator $Y|_{\mathcal M}\colon\mathcal M\to Y\mathcal M$ is invertible. By the second equality in 
\eqref{xyt}, $Y|_{\mathcal M}$ realizes the relation $T|_{\mathcal M}\approx  S|_{ Y\mathcal M}$. 
We have $$Y\mathcal M=Y\operatorname{clos}X\omega H^2=
\operatorname{clos}YX\omega H^2=\operatorname{clos}g(S)\omega H^2=
\vartheta\omega H^2,$$ 
where $\vartheta$ is the inner factor of $g$. Clearly, $ S|_{\vartheta\omega  H^2}\cong S$.
\end{proof}

The following simple lemma can be considered as a partial inverse to Theorem \ref{theorem1}.

 \begin{lemma}\label{leminvers} Suppose $g$, $\eta$, $h$, $\omega\in H^\infty$, $\omega$ is inner, and 
$1=g\eta+\omega h$. Furthermore, suppose $T\colon\mathcal H\to\mathcal H$ is a polynomially bounded operator, 
$Y\colon\mathcal H\to H^2$ is a quasiaffinity,  $(Y,S)$ is the isometric asymptote of $T$, 
$X\colon H^2\to\mathcal H$ is an operator such that $YX=g(S)$. 
Set $\mathcal M=\operatorname{clos}\omega(T)\mathcal H$. If $T|_{\mathcal M}\approx S$, then $T\approx S$.
\end{lemma}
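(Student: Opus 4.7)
The plan is to apply Proposition~\ref{propa1} with $V=S$ and $\mathcal K=H^2$ to place $T$ into an upper-triangular form whose $(1,1)$-block is $S|_{\omega H^2}$, identify the $(2,2)$-block $T_0$ as being annihilated by $\omega$, and then use $X$ together with the Bezout identity $1=g\eta+\omega h$ to promote the remaining quasiaffinity $Y_0$ to an invertible operator.

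I would first verify that $Y|_{\mathcal M}$ is left invertible: since $T|_{\mathcal M}\approx S$ and $S$ is an isometry, the criterion recalled at the beginning of Section~2 applies. Combined with $Y\omega(T)=\omega(S)Y$ and the density of $Y\mathcal H$ in $H^2$, this yields $\mathcal K_1=Y\mathcal M=\omega H^2$, hence $\mathcal K_0=\mathcal K_\omega$ and $V_0=T_\omega$. Proposition~\ref{propa1} then supplies an invertible $Z$ with
$$T\approx R=\begin{pmatrix}S|_{\omega H^2}& V_2Y_0\\ \mathbb O & T_0\end{pmatrix}$$
on $\omega H^2\oplus\mathcal H_0$, together with a quasiaffinity $I_{\omega H^2}\oplus Y_0$ intertwining $R$ and $S$; in particular $Y_0\colon\mathcal H_0\to\mathcal K_\omega$ is injective with dense range and intertwines $T_0$ with $T_\omega$. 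Since $\omega(T)\mathcal H\subset\mathcal M$, the $(2,2)$-block of $\omega(T)$ vanishes, i.e., $\omega(T_0)=\mathbb O$.

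The crucial step is to show that $Y_0$ is in fact invertible. For this I would introduce $X_0:=P_{\mathcal H_0}X|_{\mathcal K_\omega}\colon\mathcal K_\omega\to\mathcal H_0$; this is legitimate because $P_{\mathcal H_0}X$ intertwines $S$ with $T_0$ and $\omega(T_0)=\mathbb O$ forces $P_{\mathcal H_0}X$ to vanish on $\omega H^2$, so it factors through $\mathcal K_\omega$. Moreover $X_0$ intertwines $T_\omega$ with $T_0$. A direct computation using $YX=g(S)$ and $Y\mathcal M\subset\omega H^2$ yields
$$Y_0X_0=P_{\mathcal K_\omega}g(S)|_{\mathcal K_\omega}=g(T_\omega).$$
Because $\omega(T_\omega)=\mathbb O$, the identity $1=g\eta+\omega h$ applied through the $H^\infty$-calculus of $T_\omega$ gives $g(T_\omega)\eta(T_\omega)=I$, so $g(T_\omega)$ is invertible; hence $Y_0$ has the right inverse $X_0\eta(T_\omega)$, and together with its injectivity this makes $Y_0$ bijective and therefore boundedly invertible.

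Consequently $I_{\omega H^2}\oplus Y_0$ is invertible and realizes $R\approx S$; combined with $T\approx R$ we obtain $T\approx S$. The step where some care is needed is the identification $Y_0X_0=g(T_\omega)$, which is the point where the structural decomposition provided by Proposition~\ref{propa1} is coupled to the Bezout data through the given operator $X$.
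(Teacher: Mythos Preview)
Your argument is correct, but it takes a considerably longer route than the paper's. The paper never invokes Proposition~\ref{propa1} or any triangular decomposition: after observing (as you do) that $Y|_{\mathcal M}$ is left invertible and hence $Y\mathcal M=\omega H^2$, it simply shows directly that $Y$ is surjective. Given $u\in H^2$, pick $x\in\mathcal M$ with $Yx=\omega u$; then
\[
u=(g\eta+\omega h)u=YX(\eta u)+h(S)Yx=Y\bigl(X\eta u+h(T)x\bigr),
\]
so $Y\mathcal H=H^2$, and since $\ker Y=\{0\}$ the Closed Graph Theorem gives invertibility of $Y$.

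The difference is that you use Bezout on the \emph{compressed} level---showing $g(T_\omega)$ is invertible and hence $Y_0$ is---while the paper uses Bezout directly on $H^2$ to exhibit preimages under $Y$. Your approach yields more structural information along the way (e.g., $\omega(T_0)=\mathbb O$ and $T_0\sim T_\omega$), which mirrors the setup of Theorem~\ref{theorem1} and could be useful in other contexts; the paper's argument is a three-line computation requiring no decomposition at all. The step you flagged, $Y_0X_0=g(T_\omega)$, is indeed the only place needing a little care, and your justification (via $P_{\mathcal K_\omega}YX=P_{\mathcal K_\omega}g(S)$ and the vanishing of $P_{\mathcal H_0}X$ on $\omega H^2$) is sound.
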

\begin{proof} By assumption, $Y$ realizes the relation $T\prec S$. By \cite{mlak} or {\cite[Proposition 16]{ker16}}, $T$ is a.c.. 
Thus, the operator $\omega(T)$ is well defined. Since $Y\omega(T)=\omega(S)Y$ and $\operatorname{clos}Y\mathcal H = H^2$, we conclude that 
$\operatorname{clos}Y\mathcal M = \omega H^2$. Suppose that $T|_{\mathcal M}\approx S$. By {\cite[Theorem 1]{ker89}}, 
$Y|_{\mathcal M}$ is left invertible, in particular, $Y\mathcal M=\operatorname{clos}Y\mathcal M = \omega H^2$. 
Let $u\in H^2$. There exists $x\in\mathcal M$ such that $\omega u=Yx$. We have 
\begin{align*} u&=(g\eta+\omega h)u=g(S)(\eta u) + h(S)(\omega u)= (YX)(\eta u) + h(S)Yx\\&=Y(X(\eta u) + h(T)x).\end{align*}
We conclude that $Y \mathcal H = H^2$. Since $\ker Y=\{0\}$, $Y$ is invertible by the Closed Graph Theorem. \end{proof}
The following lemma is Nevanlinna's theorem, see {\cite[Corollary I.B.3.3.2]{nik}}.

\begin{lemma}[Nevanlinna's theorem]\label{aak} Suppose  that $\theta$, $g$, $\eta\in H^\infty$, $\|g\eta\|_\infty<1$, and $\theta$ is an inner function. 
Then there exists an inner function $\omega\in H^\infty$ such that $\omega-g\eta\in\theta H^\infty$.\end{lemma}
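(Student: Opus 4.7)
The plan is to convert the statement into a unimodular extension problem on $\mathbb{T}$. Writing $\omega = g\eta + \theta h$ with $h \in H^\infty$ to be determined, the requirement that $\omega$ be inner---equivalently, $|\omega| = 1$ a.e.\ on $\mathbb{T}$---rewrites, using $|\theta| = 1$ a.e., as $|h - \varphi| = 1$ a.e., where $\varphi := -\bar\theta g\eta \in L^\infty$ satisfies $\|\varphi\|_\infty = \|g\eta\|_\infty < 1$. Setting $u := h - \varphi$, the task becomes: produce a unimodular $u \in L^\infty$ whose antianalytic part matches that of $-\varphi$, i.e., $P_- u = -P_-\varphi$, so that $h = u + \varphi$ lands in $H^\infty$.

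To construct $u$, I would invoke the Szeg\H{o} outer factor $F \in H^\infty$ defined by $|F|^2 = 1 - |\varphi|^2$ a.e.\ on $\mathbb{T}$. The bound $\|\varphi\|_\infty < 1$ ensures $1 - |\varphi|^2 \geq 1 - \|\varphi\|_\infty^2 > 0$, so $\log(1 - |\varphi|^2)$ is bounded (hence integrable) and such an outer $F$ exists; moreover $|F|$ is itself bounded below by $\sqrt{1 - \|\varphi\|_\infty^2}$, making $F$ invertible in $L^\infty$. A standard Schur-complement / linear-fractional manipulation, using $F$ as the ``defect factor'' of $\varphi$ against the condition $|\varphi|^2 + |F|^2 = 1$, produces a unimodular $u$ with $P_- u = -P_-\varphi$. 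The outerness of $F$ is exactly the ingredient needed to force the resulting completion to lie in $H^\infty$ rather than merely in $L^\infty$.

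The analytic completion step is the substance of the argument, and it is precisely the content of Nikolski's Corollary~I.B.3.3.2 in \cite{nik}. With the resulting $h \in H^\infty$, setting $\omega := g\eta + \theta h$ gives $\omega \in H^\infty$ with $|\omega| = 1$ a.e., so $\omega$ is inner, and $\omega - g\eta = \theta h \in \theta H^\infty$ as required. The main obstacle to a fully self-contained proof is therefore the existence of the unimodular extension $u$; one could reprove it via a Sarason-type commutant lifting or the Adamyan--Arov--Krein machinery, but since the paper cites Nikolski's formulation directly, I would simply invoke it.
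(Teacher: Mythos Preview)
The paper does not actually prove this lemma: it states it and cites \cite[Corollary~I.B.3.3.2]{nik} as the source. Your write-up does more than the paper --- you carry out the reduction $\omega=g\eta+\theta h$, $|h-\varphi|=1$ a.e.\ with $\varphi=-\bar\theta g\eta$, $\|\varphi\|_\infty<1$ --- but for the substantive step (existence of the unimodular $u$ with prescribed antianalytic part) you invoke exactly the same reference. So your treatment is consistent with, and strictly more detailed than, the paper's; both ultimately rest on Nikolski's formulation.

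One small comment on your sketch: the passage ``a standard Schur-complement / linear-fractional manipulation \ldots\ produces a unimodular $u$ with $P_-u=-P_-\varphi$'' is the entire content of the result and is left vague. The reduction you wrote is clean and correct, but the sentence about the outer defect factor $F$ does not by itself explain how $u$ is obtained; you are right to flag this as the point where one must appeal to Nikolski (or to an AAK/Sarason-type argument). Since the paper itself makes no attempt at a self-contained proof, this is not a defect relative to the paper --- just be aware that the Szeg\H{o}-factor remark is suggestive rather than a proof.
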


The following lemma is well known, see, for example, {\cite[Lemma 2]{sznagy}} or {\cite[Proposition X.1.1]{sfbk}}.

\begin{lemma}\label{lemrelprime} Suppose   that $\gamma$, $\theta$, $f\in H^\infty$,   $\gamma$ and $\theta$  are inner, 
 $\theta$ and the inner factor of $f$ are relatively prime.
 Then there exists $t\in\mathbb C$ such that $\gamma$ 
and the inner factor of $f +t\theta$ are  relatively  prime.\end{lemma}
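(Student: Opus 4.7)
The plan is to show that the exceptional set
\[
E = \{t \in \mathbb{C} : \gcd(\gamma, \text{inner factor of } f + t\theta) \neq 1\}
\]
is at most countable; then any $t \in \mathbb{C} \setminus E$ will do. The cornerstone observation is that a non-constant inner function $\alpha$ can divide $f + t\theta$ for at most one value of $t$: if $\alpha$ divided both $f + t\theta$ and $f + s\theta$ with $t \neq s$, subtracting would give $\alpha \mid (t - s)\theta$, hence $\alpha \mid \theta$, and then $\alpha \mid f$; this would force $\alpha \mid \gcd(\theta, \text{inner factor of } f) = 1$, making $\alpha$ constant.

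I would factor $\gamma$ as a Blaschke part (with countable zero set $Z(\gamma) \subset \mathbb{D}$) times a singular inner function associated to a finite positive Borel measure $\sigma$ on $\mathbb{T}$. Then split $E = E_B \cup E_S$, where $E_B$ collects those $t \in E$ for which $(f + t\theta)(z) = 0$ for some $z \in Z(\gamma)$, and $E_S$ collects those $t \in E$ for which the singular measure of the inner factor of $f + t\theta$ has nontrivial infimum with $\sigma$. Countability of $E_B$ is immediate: for each $z \in Z(\gamma)$ the equation $f(z) + t\theta(z) = 0$ has exactly one solution in $t$ when $\theta(z) \neq 0$ and none when $\theta(z) = 0$ (else the Blaschke factor at $z$ would divide both $\theta$ and the inner factor of $f$, contradicting the coprimality hypothesis). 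Hence $|E_B| \leq |Z(\gamma)| \leq \aleph_0$.

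For $E_S$, the plan is to attach to each $t \in E_S$ the measure $\tau_t$ equal to the infimum of $\sigma$ with the singular measure of the inner factor of $f + t\theta$; by construction $\tau_t$ is non-zero and dominated by the finite measure $\sigma$. If distinct $t, s \in E_S$ had $\tau_t \wedge \tau_s \neq 0$, then the singular inner function with measure $\tau_t \wedge \tau_s$ would be non-constant and would divide both $f + t\theta$ and $f + s\theta$, contradicting the cornerstone. So $\{\tau_t\}_{t \in E_S}$ is a family of non-zero, pairwise mutually singular positive measures, each dominated by the finite measure $\sigma$; a straightforward pigeonhole (for each $n$, only finitely many $\tau_t$ can have total mass exceeding $1/n$, else one extracts infinitely many pairwise disjoint Borel carriers of $\sigma$-mass $> 1/n$) forces $E_S$ to be countable. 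Thus $E$ is countable. The hard part is precisely this measure-theoretic count for the singular component; the Blaschke reduction and the cornerstone observation are routine.
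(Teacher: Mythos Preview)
Your argument is correct. The cornerstone observation, the Blaschke count, and the singular-measure pigeonhole are all sound; in particular, pairwise $\tau_t\wedge\tau_s=0$ does yield mutual singularity, and then for any finite subfamily one chooses disjoint carriers whose $\sigma$-masses sum to more than $k/n$, bounding $|E_S^n|\le n\,\sigma(\mathbb T)$.

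As for comparison: the paper does not actually prove this lemma---it simply cites \cite[Lemma~2]{sznagy} and \cite[Proposition~X.1.1]{sfbk}. The argument you give is essentially the standard one found in those references: one shows that the greatest common inner divisors $\gcd(\gamma,\text{inner factor of }f+t\theta)$ are pairwise coprime as $t$ varies (your cornerstone), and then observes that a fixed inner function $\gamma$ admits at most countably many pairwise coprime non-constant inner divisors (your Blaschke/singular split). So there is no methodological difference to highlight; you have reconstructed the classical proof.
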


\begin{lemma}\label{lem0} Suppose that $\gamma$, $\theta\in H^\infty$ are inner functions, $T_0$ is a polynomially bounded  operator,  $Y_0$ is  a quasiaffinity, and $Y_0T_0= T_\theta Y_0$. Then there exist  a quasiaffinity $X_0$ and 
$g\in H^\infty$ 
such that $X_0T_\theta=T_0X_0$, $Y_0X_0=g(T_\theta)$, and $\gamma$ and the inner factor of $g$ are relatively prime. \end{lemma}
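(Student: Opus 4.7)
The plan is to bring $T_0$ into the framework of $C_0$-operators, use a quasisimilarity theorem to produce $X_0$, recognize $Y_0X_0$ as a function of $T_\theta$, and finally shift $g$ modulo $\theta$ to secure the coprimeness of its inner factor with $\gamma$.

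First, I would check that $T_0$ is absolutely continuous and that $\theta(T_0)=\mathbb O$, so that $T_0$ is a $C_0$-operator in the sense of \cite{bercpr}. Writing $T_0=T_{0,a}\dotplus T_{0,s}$ as in \cite{mlak} or \cite{ker16}, the intertwining $Y_0T_0=T_\theta Y_0$ together with injectivity of $Y_0$ forces $T_{0,s}=\mathbb O$, since the singular part cannot map nontrivially into the absolutely continuous $T_\theta$. Once $T_0$ is absolutely continuous, the identity $Y_0\theta(T_0)=\theta(T_\theta)Y_0=\mathbb O$ together with $\ker Y_0=\{0\}$ gives $\theta(T_0)=\mathbb O$.

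Second, since $Y_0$ realizes $T_0\prec T_\theta$ and $T_\theta$ has the one-block Jordan model $\theta$, the Jordan-model/quasisimilarity theory for $C_0$-operators (\cite{bercpr} and its polynomially bounded extension) implies that the Jordan model of $T_0$ is also $\theta$, hence $T_0\sim T_\theta$. This produces a quasiaffinity $X_0\colon\mathcal K_\theta\to\mathcal H_0$ with $X_0T_\theta=T_0X_0$. This is the main obstacle: it rests on the classification of polynomially bounded $C_0$-operators up to quasisimilarity, extending the classical Sz.-Nagy--Foias--Bercovici theory for contractions.

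Third, since both $Y_0$ and $X_0$ intertwine $T_\theta$ and $T_0$ in opposite directions, the composition $Y_0X_0$ lies in the commutant of $T_\theta$. The commutant of the cyclic $C_0$-contraction $T_\theta$ coincides with its $H^\infty$-functional calculus, so $Y_0X_0=g_1(T_\theta)$ for some $g_1\in H^\infty$. As the composition of two quasiaffinities, $Y_0X_0$ is injective; and for $f\in H^\infty$ the operator $f(T_\theta)$ is injective if and only if the inner factor of $f$ is relatively prime to $\theta$. Hence the inner factor of $g_1$ is relatively prime to $\theta$.

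Finally, Lemma \ref{lemrelprime} applied to $\gamma$, $\theta$, $g_1$ (whose hypothesis is supplied by the preceding step) yields $t\in\mathbb C$ such that $\gamma$ and the inner factor of $g:=g_1+t\theta$ are relatively prime. Since $\theta(T_\theta)=\mathbb O$, we have $g(T_\theta)=g_1(T_\theta)=Y_0X_0$, so $(X_0,g)$ satisfies all the required conclusions.
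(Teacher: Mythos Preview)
Your argument is correct and tracks the paper's proof closely: obtain $T_0\sim T_\theta$, take any quasiaffinity $X_0$ realizing $T_\theta\prec T_0$, write $Y_0X_0=f(T_\theta)$ via the commutant of $T_\theta$, observe that the inner factor of $f$ is prime to $\theta$, and then replace $f$ by $g=f+t\theta$ using Lemma~\ref{lemrelprime}. The one substantive difference is how $T_0\sim T_\theta$ is reached. You first verify that $T_0$ is an a.c.\ $C_0$-operator and then appeal to a Jordan-model classification for polynomially bounded $C_0$-operators, which you yourself flag as ``the main obstacle.'' The paper bypasses this: from \cite{bercpr} it uses only the existence of a \emph{contraction} $R_0$ with $R_0\prec T_0$; composing gives $R_0\prec T_\theta$, and now the classical contraction result {\cite[Theorem X.5.7]{sfbk}} (or {\cite[Proposition III.5.32]{berbook}}) yields $T_\theta\prec R_0$, hence $T_\theta\prec R_0\prec T_0$. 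This route needs no quasisimilarity theory beyond the contraction case and makes your preliminary step (checking $T_0$ is a.c.\ with $\theta(T_0)=\mathbb O$) unnecessary.
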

\begin{proof}  By \cite{bercpr}, there exists a  contraction $R_0$ such that $R_0\prec T_0$. 
From the relations $R_0\prec T_0\prec T_\theta$ and the fact that $T_\theta$ is a $C_0$-contraction we conclude that 
$ T_\theta\prec R_0$  by {\cite[Theorem X.5.7]{sfbk}} or  {\cite[Proposition III.5.32]{berbook}}. We obtain that $T_0\sim T_\theta$. Denote by $X_0$ the quasiaffinity which realizes the relation $T_\theta\prec T_0$. We have $Y_0X_0\in \{T_\theta\}'$. 
By {\cite[Theorem X.2.10]{sfbk}} or  {\cite[Proposition III.1.21]{berbook}}, there exists 
$f\in H^\infty$ such that $Y_0X_0=f(T_\theta)$.  Since $f(T_\theta)$ is a quasiaffinity, $\theta$ and the inner factor of $f$ are relatively prime. By Lemma~\ref{lemrelprime}, there  exists $t\in\mathbb C$ such that $\gamma$ 
and the inner factor of $f +t\theta$ are relatively prime. Put $g=f +t\theta$. Then $Y_0X_0=g(T_\theta)$. 
 \end{proof}

\begin{lemma}\label{lemisoasymp} Suppose  that $T$ is a polynomially bounded operator and $T\prec S$. 
Then $S$ is the isometric asymptote of $T$.\end{lemma}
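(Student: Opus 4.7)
The strategy is to use the universal property of the isometric asymptote from \cite{ker89} to factor the given quasiaffinity through it, and then to identify the resulting isometry $V$ with $S$ via the Wold decomposition together with the asymptotic behaviour of $T^*$.

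Let $X\colon\mathcal H\to H^2$ realise $T\prec S$ and let $(Y,V)$ on $\mathcal K$ denote the isometric asymptote of $T$, so $YT=VY$ and $Y\mathcal H$ is dense in $\mathcal K$. By universality there is $Z\colon\mathcal K\to H^2$ with $X=ZY$ and $ZV=SZ$; the conditions $\ker X=\{0\}$ and $\overline{X\mathcal H}=H^2$ translate into $\ker Y=\{0\}$ (so $Y$ is a quasiaffinity) and $\overline{Z\mathcal K}=H^2$. Decomposing $V=V_u\oplus V_s$ by Wold, invertibility of $V_u$ combined with $ZV=SZ$ gives $Z\mathcal K_u\subset\bigcap_n S^nH^2=\{0\}$, so $Z|_{\mathcal K_u}=0$.

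To rule out $V_u$, I note first that $T^{*n}\to 0$ strongly on $\mathcal H$: this follows from $T^{*n}X^*=X^*S^{*n}$, the strong convergence $S^{*n}\to 0$, density of $X^*H^2$ in $\mathcal H$ (since $\ker X=\{0\}$), and uniform boundedness of $\{T^{*n}\}$. The adjoint intertwining $T^{*n}Y^*=Y^*V^{*n}$ then yields $Y^*V_u^{*n}k\to 0$ for every $k\in\mathcal K_u$, while $\|V_u^{*n}k\|=\|k\|$ remains constant. Invoking Kerchy's asymptotic construction of $(Y,V)$ (\cite{ker89}, see also \cite{ker16}), this forces $Y^*|_{\mathcal K_u}=0$, and density of $Y\mathcal H$ in $\mathcal K$ then gives $\mathcal K_u=\{0\}$, so $V$ is a pure isometry.

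For the multiplicity, the dual quasiaffinity $X^*$ realizes $S^*\buildrel d \over \prec T^*$, giving $\mu_{T^*}\leq\mu_{S^*}=1$; together with $\mu_V\leq\mu_T$ (from $T\buildrel d \over \prec V$ via the quasiaffinity $Y$) and Kerchy's results in \cite{ker16} relating $\mu_T$ to $\mu_{T^*}$ for polynomially bounded operators in class $C_{\cdot 0}$, we obtain $\mu_V=1$ and therefore $V\cong S$. The main obstacle is the step ruling out the unitary part $V_u$: while the strong convergence $Y^*V_u^{*n}\to 0$ is straightforward, translating this into $Y^*|_{\mathcal K_u}=0$ requires the detailed Kerchy construction of the isometric asymptote, and this is where the delicate part of the argument lies.
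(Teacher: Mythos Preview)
Your argument has two genuine gaps.

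The first is the step ruling out the unitary part $\mathcal K_u$. From $T^{*n}\to 0$ strongly you correctly derive $Y^*V_u^{*n}k\to 0$ for every $k\in\mathcal K_u$, but the passage to $Y^*|_{\mathcal K_u}=0$ is not justified: the vectors $V_u^{*n}k$ form a bounded orbit, and a bounded operator may well send such an orbit to zero without vanishing on the whole subspace. You yourself flag this as ``the delicate part'' and appeal to ``the detailed Kerchy construction'' without giving an argument. Note that since $Y\mathcal H$ is dense in $\mathcal K$, the condition $Y^*|_{\mathcal K_u}=0$ is \emph{equivalent} to $\mathcal K_u=\{0\}$, so at this point you are restating the goal rather than proving it. I do not see a result in \cite{ker89} or \cite{ker16} asserting that a power bounded $C_{\cdot 0}$ operator has a pure isometric asymptote.

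The second gap is the multiplicity step. You need $\mu_V\leq 1$; you obtain $\mu_V\leq\mu_T$ and then invoke unspecified ``results in \cite{ker16} relating $\mu_T$ to $\mu_{T^*}$''. No such general relation is available: cyclicity of $T^*$ does not bound $\mu_T$. Observe also that the relation $V\buildrel d\over\prec S$ coming from $Z$ yields only $\mu_S\leq\mu_V$, the wrong inequality.

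The paper avoids both difficulties by a sandwich argument that supplies exactly the missing direction $S\buildrel d\over\prec V_T$. It invokes \cite{bercpr} to produce a \emph{contraction} $R$ with $R\prec T$, hence $R\prec S$; for contractions it is already known (\cite{kers}, \cite{gam12}) that the isometric asymptote of $R$ is $S$. The universal property from \cite{ker89} is then applied twice: to the quasiaffinity realizing $R\prec T$, giving a dense-range $Z_1$ with $Z_1S=V_TZ_1$; and to the quasiaffinity realizing $T\prec S$, giving a dense-range $Z_2$ with $Z_2V_T=SZ_2$. Thus $S\buildrel d\over\prec V_T\buildrel d\over\prec S$ with $V_T$ an a.c.\ isometry (since $T$ is a.c.\ by \cite{ker16}), which forces $V_T\cong S$. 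The detour through a contraction is what makes both the purity and the multiplicity fall out simultaneously.
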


\begin{proof} Since $T$ is polynomially bounded, by \cite{bercpr}, there exists a contraction $R$ such that $R\prec T$. 
Clearly, $R\prec S$. By {\cite[Proposition 9]{kers}} or  {\cite[Lemma 2.1]{gam12}}, the isometric asymptote of $R$ is $S$. Denote by  $Y_R$ and $Y_T$ the canonical intertwining mappings for $R$ and $T$, and by $X$ and $Y$ the quasiaffinities which realize the relations $R\prec T$ and $T\prec S$, respectively. Let $V_T$ be the isometric asymptote of $T$.  
By {\cite[Theorem 1(a)]{ker89}}, there exist  operators $Z_1$  and $Z_2$ such that $Z_1S=V_TZ_1$,  $Y_T X=Z_1 Y_R$, 
$Z_2V_T=SZ_2$, and $Y=Z_2Y_T$.  Since the ranges of $Y_T$ and $Y$ are dense, the 
ranges of $Z_1$ and $Z_2$ are also dense. We obtain $S\buildrel d\over\prec V_T$ and $V_T\buildrel d\over\prec S$. 
By {\cite[Proposition 16]{ker16}}, $T$ is a.c.. Therefore, $V_T$ is a.c..  
Thus, $V_T\cong S$. \end{proof}

\begin{theorem}\label{theoremamain} Suppose that $T\colon\mathcal H\to\mathcal H$ is a polynomially bounded operator,  $T\prec S$, 
   $\mathcal M\in\operatorname{Lat}T$, and $T|_{\mathcal M}\approx S$. Then there exist  the operators 
$X_1$,$X_2\colon H^2\to\mathcal H $, $Y\colon\mathcal H\to H^2$, the functions $g_1$, $g_2\in H^\infty$ and   $\mathcal N\in\operatorname{Lat}T$ 
such that 
\begin{equation}\label{xyg} YT=SY, \  \   X_kS=TX_k, \ \ YX_k=g_k(S) \   \text{ and } \  \ X_kY=g_k(T), \ \ \ k=1,2,  \end{equation}
 the inner factors of $g_1$ and $g_2$ are relatively prime,  $T|_{\mathcal N}\approx S$, and 
$\mathcal M\vee\mathcal N=\mathcal H$. \end{theorem}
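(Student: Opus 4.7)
The plan is to feed the data produced by Proposition \ref{propa1} into Theorem \ref{theorem1}, using Lemmas \ref{lem0}, \ref{lemrelprime} and \ref{aak} to engineer a pair of functions with the correct arithmetic.

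First, by Lemma \ref{lemisoasymp}, $S$ is the isometric asymptote of $T$, so there is a quasiaffinity $\tilde Y\colon\mathcal H\to H^2$ with $\tilde YT = S\tilde Y$. Since $T|_{\mathcal M}\approx S$, the remark preceding Proposition \ref{propa1} says $\tilde Y|_{\mathcal M}$ is left invertible, and by Beurling's theorem $\tilde Y\mathcal M = \theta H^2$ for some inner $\theta$. Apply Proposition \ref{propa1} with $V = S$, setting $\mathcal H_0 = \mathcal H\ominus\mathcal M$, $T_0 = P_{\mathcal H_0}T|_{\mathcal H_0}$ and $Y_0 = P_{\mathcal K_\theta}\tilde Y|_{\mathcal H_0}$: this produces an invertible $Z\colon\mathcal H\to\theta H^2\oplus\mathcal H_0$ with $R := ZTZ^{-1}$ of the form \eqref{deft}, a quasiaffinity $Y_0\colon\mathcal H_0\to\mathcal K_\theta$ with $Y_0T_0 = T_\theta Y_0$, and $\tilde Y = (I_{\theta H^2}\oplus Y_0)Z$; moreover $T_0$ is polynomially bounded as a compression of $T$ to a semi-invariant subspace.

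Next, apply Lemma \ref{lem0} to $(T_0,Y_0)$ with $\gamma = \theta$ to obtain a quasiaffinity $X_0\colon\mathcal K_\theta\to\mathcal H_0$ and $f\in H^\infty$ with $X_0T_\theta = T_0X_0$, $Y_0X_0 = f(T_\theta)$, and $\theta$ relatively prime to the inner factor of $f$. Define $g_k := f + t_k\theta$, where $t_1,t_2\in\mathbb C$ come from two uses of Lemma \ref{lemrelprime}: $t_1$ chosen so the inner factor $\vartheta_1$ of $g_1$ is relatively prime to $\theta$, and $t_2$ chosen so the inner factor $\vartheta_2$ of $g_2$ is relatively prime to $\vartheta_1\theta$. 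Because $g_k - f\in\theta H^\infty$, the equality $Y_0X_0 = g_k(T_\theta)$ survives, and the $\vartheta_k$ are pairwise relatively prime, both relatively prime to $\theta$. Now pick $\varepsilon\in(0,1)$ with $\varepsilon\|g_k\|_\infty < 1$ and invoke Lemma \ref{aak} (with $\eta_k = \varepsilon$) to obtain inner $\omega_k$ and $h_k\in H^\infty$ with $\omega_k = \varepsilon g_k + \theta h_k$.

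Theorem \ref{theorem1}, applied to $(g_k,\varepsilon,h_k,\theta,\omega_k;T_0,X_0,Y_0)$, then produces operators $X^{(k)}\colon H^2\to\theta H^2\oplus\mathcal H_0$ satisfying \eqref{xyt} for $R$, together with $\mathcal M_k := \operatorname{clos}X^{(k)}\omega_k H^2$ on which $R$ is similar to $S$. Set $X_k := Z^{-1}X^{(k)}$, $Y := (I_{\theta H^2}\oplus Y_0)Z = \tilde Y$, and $\mathcal N := Z^{-1}\mathcal M_2$. The first three identities of \eqref{xyg} are immediate from \eqref{xyt} after conjugating by $Z$. For the last one, the relation $YX_kY = g_k(S)Y = Yg_k(T)$ (using that $T$ is absolutely continuous, since $T\prec S$, by \cite{ker16}) together with injectivity of $Y$ yields $X_kY = g_k(T)$.

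The main obstacle is showing $\mathcal M\vee\mathcal N = \mathcal H$. Under $Z$ this becomes $(\theta H^2\oplus 0)\vee\mathcal M_2 = \theta H^2\oplus\mathcal H_0$, equivalently $\operatorname{clos}P_{\mathcal H_0}\mathcal M_2 = \mathcal H_0$. By the block form \eqref{defx} of $X^{(2)}$, $P_{\mathcal H_0}\mathcal M_2 = \operatorname{clos}X_0(P_{\mathcal K_\theta}\omega_2 H^2)$, and since $X_0$ is a quasiaffinity it suffices that $P_{\mathcal K_\theta}\omega_2 H^2$ be dense in $\mathcal K_\theta$, i.e., that $\theta$ and $\omega_2$ be relatively prime. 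If an inner $\delta$ divides both, then $\delta$ divides $\omega_2 - \theta h_2 = \varepsilon g_2$, hence $\vartheta_2$; combined with $\delta\mid\theta$ and $\gcd(\theta,\vartheta_2)=1$, this forces $\delta$ to be constant. Arranging this whole chain of relative primeness while keeping $Y_0X_0 = g_k(T_\theta)$ simultaneously valid for both $k$ — which is why I use a single $X_0$ and $f$ with two different constants $t_k$, rather than two independent applications of Lemma \ref{lem0} — is the delicate point.
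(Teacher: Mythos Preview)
Your proof is correct and follows essentially the same route as the paper's: reduce via Proposition~\ref{propa1} to the triangular model \eqref{deft}, manufacture $g_1,g_2$ with coprime inner parts through Lemmas~\ref{lem0} and~\ref{lemrelprime}, and invoke Theorem~\ref{theorem1} with Lemma~\ref{aak} to build $\mathcal N$. The only differences are implementation details: the paper applies Lemma~\ref{lem0} twice (producing two quasiaffinities $X_{10},X_{20}$) instead of perturbing a single $f$ by two constants; it verifies $X_kY=g_k(T)$ by writing out the block form of $g(T)$ rather than via injectivity of $Y$; and it proves $\mathcal M\vee\mathcal N=\mathcal H$ by observing that \eqref{xyg} makes $\mathcal J_Y$ a lattice isomorphism and then computing $\mathcal J_Y(\mathcal M\vee\mathcal N)=\theta H^2\vee\vartheta_1\omega H^2=H^2$, whereas you argue directly that $P_{\mathcal H_0}\mathcal M_2$ is dense in $\mathcal H_0$.
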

\begin{proof} By Lemma~\ref{lemisoasymp}, the isometric asymptote of $T$ is $S$. Denote by $Y$ the canonical 
 intertwining mapping. Then $Y|_{\mathcal M}$ is left invertible.  Let $\theta\in H^\infty$ be an inner function such that $Y\mathcal M=\theta H^2$. Let $T_0$ and $Y_0$ be defined as in Proposition~\ref{propa1}. Without loss of generality, we can suppose that $T$  has the form \eqref{deft} and $Y=I_{\theta H^2}\oplus Y_0$.   Then $Y_0$ is  a quasiaffinity which realizes the relation $T_0\prec T_\theta$.

 Denote by  $X_{10}$ and $g_1$  the quasiaffinity  and the function from  Lemma~\ref{lem0} applied to $\gamma=\theta$, 
$\theta$,  $T_0$, and $Y_0$. Denote by $\vartheta_1$ the inner factor of $g_1$. By Lemma~\ref{lem0}, $\theta$  and $\vartheta_1$ are relatively  prime. Denote by $X_1$ the operator defined by \eqref{defx} applied to $g_1$ and $X_{10}$. 

Take $\eta\in H^\infty$ such that $\|g_1\eta\|_\infty<1$, and $\theta$ and the inner factor of $\eta$ are relatively prime. By Lemma~\ref{aak}, there exist $\omega$, $h\in H^\infty$ such that $\omega$ is inner and $\omega=g_1\eta+\theta h$. Put $\mathcal N=\operatorname{clos}X_1\omega H^2$. By Theorem~\ref{theorem1}, $\mathcal N\in\operatorname{Lat}T$, $T|_{\mathcal N}\approx  S$, and 
$Y\mathcal N=\vartheta_1\omega H^2$. Furthermore,  \eqref{xyt} is fulfilled for $T$, $X_1$, $Y$, and $g_1$. Note that 
$\theta$ and $\vartheta_1\omega$ are  relatively prime. 

Denote by  $X_{20}$ and $g_2$  the quasiaffinity   and the function from  Lemma~\ref{lem0} applied to 
$\gamma=\vartheta_1$, $\theta$,  $T_0$, and $Y_0$. Denote by $\vartheta_2$ the inner factor of $g_2$. By Lemma~\ref{lem0}, $\vartheta_1$  and $\vartheta_2$ are relatively prime. Denote by $X_2$ the operator defined by \eqref{defx} applied to $g_2$ and $X_{20}$.  Then  \eqref{xyt} is fulfilled for $T$, $X_2$, $Y$, and $g_2$.  

It is easy to see from \eqref{deft} and \eqref{xyt} that 
$$ g(T)=\begin{pmatrix} g(S)|_{\theta H^2} & P_{\theta H^2} g(S)|_{\mathcal K_\theta}Y_0 \\ 
\mathbb O & g(T_0)\end{pmatrix}$$ for every $g\in H^\infty$.  The equations \eqref{xyg} follow from this equation 
 and \eqref{xyt}.  

It is well known that if \eqref{xyg} are fulfilled, and the inner factors of $g_1$ and $g_2$ are relatively prime,  then the mapping  $\mathcal J_Y$ defined by \eqref{lattice}
is a lattice isomorphism. We have 
$$ \mathcal J_Y(\mathcal M\vee\mathcal N)= \mathcal J_Y\mathcal M\vee\mathcal J_Y\mathcal N=\theta H^2\vee\vartheta_1\omega H^2=H^2$$
(because $\theta$ and $\vartheta_1\omega$ are  relatively prime). Therefore, $\mathcal M\vee\mathcal N=\mathcal H$.
  \end{proof}

\begin{remark}\label{remcontr} If $T$ is a contraction such that $T\prec S$, then there exists $\mathcal M\in\operatorname{Lat}T$ such that $T|_{\mathcal M}\approx S$ by \cite{ker07} or {\cite[Theorem IX.3.5]{sfbk}}. Thus, $T$ satisfies to the conditions of Theorem~\ref{theoremamain}.\end{remark}
 
\begin{lemma}\label{lemlat} Suppose that  $T$ is a polynomially bounded operator, 
$Y$ is a quasiaffinity, and $YT=SY$. 
Then the mapping $\mathcal J_Y$ defined by \eqref{lattice} 
is a lattice isomorphism.\end{lemma}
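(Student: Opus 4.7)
By the criterion stated just after \eqref{lattice} (see {\cite[VII.1.19]{berbook}}), it suffices to show that $\mathcal J_Y$ is a bijection. The hypotheses force $T\prec S$, so by {\cite[Proposition 16]{ker16}} the operator $T$ is a.c.\ polynomially bounded, and $\vartheta(T)$ is defined for every $\vartheta\in H^\infty$.

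For surjectivity, every $\mathcal N\in\operatorname{Lat} S$ is of the form $\vartheta H^2$ for some inner $\vartheta\in H^\infty$ (Beurling); the degenerate case $\mathcal N=\{0\}$ corresponds to $\mathcal M=\{0\}$ since $\ker Y=\{0\}$. In the nontrivial case, I would set $\mathcal M=\operatorname{clos}\vartheta(T)\mathcal H\in\operatorname{Lat} T$; then continuity of $Y$, the intertwining $Y\vartheta(T)=\vartheta(S)Y$, and density of $Y\mathcal H$ in $H^2$ give
\[
\mathcal J_Y(\mathcal M)=\operatorname{clos}\vartheta(S)Y\mathcal H=\vartheta H^2=\mathcal N.
\]

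For injectivity, let $\mathcal M_1,\mathcal M_2\in\operatorname{Lat} T$ with $\mathcal J_Y(\mathcal M_1)=\mathcal J_Y(\mathcal M_2)=\vartheta H^2$. Since $\mathcal J_Y(\mathcal M_1\vee\mathcal M_2)=\mathcal J_Y(\mathcal M_1)\vee\mathcal J_Y(\mathcal M_2)=\vartheta H^2$, one may assume $\mathcal M_1\subseteq\mathcal M_2$. Identifying $S|_{\vartheta H^2}\cong S$ via multiplication by $\vartheta$ and restricting $Y$ to $\mathcal M_2$, the problem reduces to the following special case: \emph{if $\mathcal M\in\operatorname{Lat} T$ satisfies $\operatorname{clos} Y\mathcal M=H^2$, then $\mathcal M=\mathcal H$}.

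This special case is the principal obstacle. My plan is to mirror the end of the proof of Theorem~\ref{theoremamain}: pick a contraction $R\prec T$ (available by \cite{bercpr}), which then satisfies $R\prec S$, so by Remark~\ref{remcontr} and Theorem~\ref{theoremamain} there exist $X_1^R,X_2^R\colon H^2\to\mathcal H_R$ and $g_1,g_2\in H^\infty$ with coprime inner factors realizing $X_k^RS=RX_k^R$, $Y_RX_k^R=g_k(S)$, and $X_k^RY_R=g_k(R)$, where $Y_R$ is the canonical intertwining map of $R$. Let $Z\colon\mathcal H_R\to\mathcal H$ realize $R\prec T$ and set $X_k=ZX_k^R$. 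By the universal property of the isometric asymptote (Theorem~1 of \cite{ker89}), $YZ=h(S)Y_R$ for some $h\in H^\infty$, and $h$ is forced to be outer since $YZ$ is itself a quasiaffinity. One then checks $X_kS=TX_k$ and $YX_k=\tilde g_k(S)$ with $\tilde g_k=hg_k$, whose inner factors remain coprime ($h$ being outer); composing with $Y$ and using $\ker Y=\{0\}$ yields $X_kY=\tilde g_k(T)$. With this data in hand, for $x\in\mathcal H$, choose $m_n\in\mathcal M$ with $Ym_n\to Yx$; then $\tilde g_k(T)x=X_kYx=\lim\tilde g_k(T)m_n\in\mathcal M$ for $k=1,2$. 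A Nevanlinna-style Bezout relation of the form $\tilde g_1\eta_1+\tilde g_2\eta_2\approx 1$, extracted from the coprimeness of the inner factors by an approximation argument along the lines of Lemma~\ref{aak}, then propagates to $x\in\mathcal M$, giving $\mathcal M=\mathcal H$. The most delicate step is this last extraction of the Bezout-type identity from coprimeness at the level of inner factors.
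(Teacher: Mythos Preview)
Your overall route coincides with the paper's: pass to a contraction $R\prec T$ via \cite{bercpr}, apply Theorem~\ref{theoremamain} to $R$ (using Remark~\ref{remcontr}), push the resulting $X_k^R$, $g_k$ forward through the quasiaffinity $Z$ realizing $R\prec T$, and obtain relations \eqref{xyg} for $T$, $Y$ with functions $\tilde g_k$ whose inner factors are relatively prime. Your derivation of $YZ=h(S)Y_R$ with $h$ outer, and hence $YX_k=\tilde g_k(S)$, $X_kY=\tilde g_k(T)$, is correct and in fact spells out what the paper compresses into ``it is easy to see''. From here the paper simply invokes the standing ``well-known'' fact (already used in the proof of Theorem~\ref{theoremamain}) that \eqref{xyg} with relatively prime inner factors forces $\mathcal J_Y$ to be a lattice isomorphism.

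The gap is in how you try to justify that last implication. Coprimeness of the inner parts of $\tilde g_1,\tilde g_2$ does \emph{not} yield a Bezout identity $\tilde g_1\eta_1+\tilde g_2\eta_2=1$ in $H^\infty$ (that would be a corona condition, which may fail even for two coprime Blaschke products), and Lemma~\ref{aak} (Nevanlinna interpolation) is unrelated to this question, so ``an approximation argument along the lines of Lemma~\ref{aak}'' cannot produce what you need. The implication the paper has in mind is obtained differently: relatively prime inner factors give that the ideal $\tilde g_1 H^\infty+\tilde g_2 H^\infty$ is weak-$*$ dense in $H^\infty$; since the $H^\infty$-calculus of an a.c.\ polynomially bounded operator is weak-$*$ to WOT continuous (\cite{mlak}, \cite{ker16}), one gets a bounded net $\phi_\alpha\in\tilde g_1 H^\infty+\tilde g_2 H^\infty$ with $\phi_\alpha(T)x\to x$ weakly; each $\phi_\alpha(T)x\in\tilde g_1(T)\mathcal H+\tilde g_2(T)\mathcal H\subset\mathcal M$ by the inclusion you already proved, so $x\in\mathcal M$. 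Replacing your final paragraph with this argument closes the gap and leaves your proof aligned with (and more explicit than) the paper's.
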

\begin{proof} By  \cite{bercpr}, there exists a contraction $R$ such that $R\prec T$. 
Clearly, $R\prec S$. By Remark~\ref{remcontr},  the conclusion 
of Theorem~\ref{theoremamain} is fulfilled for $R$. Using the relation $R\prec T$, it is easy to see that there exist operators $X_1$,$X_2\colon H^2\to\mathcal H $ and the functions $g_1$, $g_2\in H^\infty$  
such that the inner factors of $g_1$ and $g_2$ are relatively prime and  \eqref{xyg} 
  is fulfilled for $T$ and $Y$. Since the inner factors of functions $g_1$ and $g_2$ are relatively prime,  $\mathcal J_Y$ is a lattice-isomorphism.\end{proof}

\begin{remark} The relations \eqref{xyg} are proved for contractions $T$ such that  $T\prec S$ in 
 {\cite[Corollary 2.5]{gam03}} in other way. \end{remark}

The following lemma  follows from the definition of a.c. polynomially bounded operators (see {\cite[Lemma 2.2]{gam17}} for the proof).

\begin{lemma}\label{acperp} Suppose that $T$ is a polynomially bounded operator, and  
$\mathcal M\in\operatorname{Lat}T$. Then $T$ is a.c. if and only if $T|_{\mathcal M}$ and  
$ P_{\mathcal M^\perp}T|_{\mathcal M^\perp}$ are a.c..\end{lemma}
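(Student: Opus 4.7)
The plan is to use the equivalent formulation (Mlak, see \cite{mlak} or \cite{ker16}): a polynomially bounded operator $R$ is a.c.\ if and only if $p_n(R)\to\mathbb O$ in the weak operator topology for every sequence of polynomials $(p_n)$ with $\sup_n\|p_n\|_\infty<\infty$ and $p_n\to 0$ a.e.\ on $\mathbb T$.

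The forward direction is routine. Given such $(p_n)$ and $T$ a.c., for $x,y\in\mathcal M$ one has $\langle p_n(T|_{\mathcal M})x,y\rangle=\langle p_n(T)x,y\rangle\to 0$; and for $x,y\in\mathcal M^\perp$, an easy induction using $T\mathcal M\subseteq\mathcal M$ yields $P_{\mathcal M^\perp}T^kx=(P_{\mathcal M^\perp}T|_{\mathcal M^\perp})^kx$ for every $k$, so $\langle p_n(P_{\mathcal M^\perp}T|_{\mathcal M^\perp})x,y\rangle=\langle p_n(T)x,y\rangle\to 0$.

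For the converse I would argue by contradiction using the Mlak decomposition $T=T_a\dotplus T_s$, where $T_s$ on $\mathcal H_s$ is similar to a singular unitary. Let $E_s$ be the bounded idempotent onto $\mathcal H_s$ along $\mathcal H_a$; since both summands are $T$-invariant, $E_s$ commutes with every $p(T)$ and $p(T)x=p(T_a)(x-E_sx)+p(T_s)E_sx$ for all polynomials $p$ and all $x\in\mathcal H$. The crucial ingredient is that, because $T_s$ is similar to a singular unitary, for every $v\in\mathcal H_s\setminus\{0\}$ one can produce a $\|\cdot\|_\infty$-bounded sequence of polynomials $(p_n)$ with $p_n\to 0$ a.e.\ on $\mathbb T$ yet $p_n(T_s)v\not\to 0$ weakly; via the cyclic-subspace picture this amounts to the fact that multiplication by $\zeta$ on $L^2$ of a nonzero singular measure on $\mathbb T$ is not a.c., the witnessing sequence being obtained by Rudin--Carleson peak-function interpolation on a compact set carrying most of the spectral mass of $v$.

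Suppose $\mathcal H_s\ne\{0\}$. First, if some $x\in\mathcal M$ had $x_s:=E_sx\ne 0$, picking $(p_n)$ as above for $v=x_s$ gives $p_n(T_a)(x-x_s)\to 0$ weakly by a.c.\ of $T_a$, so by the block identity $p_n(T)x\not\to 0$ weakly; but $x\in\mathcal M$ forces $p_n(T)x=p_n(T|_{\mathcal M})x\to 0$ weakly, a contradiction. Hence $\mathcal M\subseteq\mathcal H_a$, so $\mathcal M\cap\mathcal H_s=\{0\}$. Next, if some $y\in\mathcal M^\perp$ had $y_s:=E_sy\ne 0$, choose $(p_n)$ for $v=y_s$ and pass to a subsequence so $p_n(T_s)y_s\to z$ weakly for some $z\in\mathcal H_s\setminus\{0\}$. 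Then $p_n(T)y\to z$ weakly; applying $P_{\mathcal M^\perp}$ and invoking the forward-direction identity gives $P_{\mathcal M^\perp}z=\text{w-}\lim p_n(P_{\mathcal M^\perp}T|_{\mathcal M^\perp})y=0$ by a.c.\ of $P_{\mathcal M^\perp}T|_{\mathcal M^\perp}$, so $z\in\mathcal M\cap\mathcal H_s=\{0\}$, a contradiction. Thus $\mathcal H=\mathcal M+\mathcal M^\perp\subseteq\mathcal H_a$ and $T$ is a.c. The main obstacle is the witnessing-sequence step for $T_s$, which is where the singular-unitary structure is essential.
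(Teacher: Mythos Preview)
Your argument is correct. The paper itself does not prove this lemma here; it only remarks that the statement ``follows from the definition of a.c.\ polynomially bounded operators'' and cites \cite[Lemma~2.2]{gam17} for the details. Your route via the Mlak characterization (a.c.\ $\Leftrightarrow$ $p_n(R)\to\mathbb O$ in WOT for every bounded polynomial sequence with $p_n\to 0$ a.e.\ on $\mathbb T$) combined with the Mlak decomposition $T=T_a\dotplus T_s$ is a standard direct proof, and the two-step contradiction ($\mathcal M\subset\mathcal H_a$, then $\mathcal M^\perp\subset\mathcal H_a$) is carried out correctly.

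Two minor points worth tightening. First, in your subsequence extraction you should make explicit that from $p_n(T_s)y_s\not\to 0$ weakly you first pass to a subsequence along which a fixed inner product stays bounded away from $0$, and only then extract a weakly convergent subsubsequence; this guarantees the weak limit $z$ is nonzero. Second, your witnessing-sequence sketch is the right idea but deserves one more line: for nonzero $v\in\mathcal H_s$ with (singular) spectral measure $\nu$, pick a compact $K\subset\mathbb T$ with $m(K)=0$ and $\nu(K)>0$, take a disk-algebra peak function $\phi$ for $K$ (closed $m$-null sets are peak sets), and uniformly approximate $\phi^n$ by polynomials $p_n$; then $\sup_n\|p_n\|_\infty\le 2$, $p_n\to 0$ $m$-a.e., while $\langle p_n(T_s)v,v\rangle=\int p_n\,d\nu\to\nu(K)\neq 0$. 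With these details filled in, the proof is complete.
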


The following lemma can be easily checked directly, therefore, its proof is omitted.

\begin{lemma}\label{lemc1} Suppose that $T\colon\mathcal H\to\mathcal H$ is an operator, 
$\mathcal M$, $\mathcal N\in\operatorname{Lat}T$, and $\mathcal H=\mathcal M\vee\mathcal N$. 
Put $\mathcal E=\mathcal M\cap\mathcal N$. Then the quasiaffinity 
$P_{\mathcal M^\perp}|_{\mathcal N\ominus\mathcal E}$ realizes the relation 
$$P_{\mathcal N\ominus\mathcal E}T|_{\mathcal N\ominus\mathcal E}\prec P_{\mathcal M^\perp}T|_{\mathcal M^\perp}.$$\end{lemma}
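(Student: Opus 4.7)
My plan is to verify the three things hidden in the conclusion: (1) $X:=P_{\mathcal M^\perp}|_{\mathcal N\ominus\mathcal E}$ has zero kernel, (2) its range is dense in $\mathcal M^\perp$, and (3) $X$ intertwines $P_{\mathcal N\ominus\mathcal E}T|_{\mathcal N\ominus\mathcal E}$ with $P_{\mathcal M^\perp}T|_{\mathcal M^\perp}$. The key underlying observation, which I will use for both (2) and (3), is that $\mathcal E\subset\mathcal M$, so $\mathcal M$ ``absorbs'' $\mathcal E$.

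For (1): if $y\in\mathcal N\ominus\mathcal E$ satisfies $P_{\mathcal M^\perp}y=0$, then $y\in\mathcal M$; but also $y\in\mathcal N$, so $y\in\mathcal M\cap\mathcal N=\mathcal E$. Combined with $y\perp\mathcal E$ this forces $y=0$.

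For (2): I will show the annihilator of $X(\mathcal N\ominus\mathcal E)$ in $\mathcal M^\perp$ is trivial. If $z\in\mathcal M^\perp$ is orthogonal to $P_{\mathcal M^\perp}(\mathcal N\ominus\mathcal E)$, then for every $y\in\mathcal N\ominus\mathcal E$ we have $\langle z,y\rangle=\langle P_{\mathcal M^\perp}z,y\rangle=\langle z,P_{\mathcal M^\perp}y\rangle=0$, so $z\perp\mathcal N\ominus\mathcal E$. Since $\mathcal E\subset\mathcal M$, the sum $\mathcal M+(\mathcal N\ominus\mathcal E)$ contains $\mathcal M+\mathcal E+(\mathcal N\ominus\mathcal E)=\mathcal M+\mathcal N$, whose closure is $\mathcal H$ by hypothesis. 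Therefore $z\perp\mathcal M$ together with $z\perp\mathcal N\ominus\mathcal E$ yields $z=0$.

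For (3): take $y\in\mathcal N\ominus\mathcal E$. Since $\mathcal N\in\operatorname{Lat}T$, we can write $Ty=u+v$ with $u\in\mathcal E$ and $v\in\mathcal N\ominus\mathcal E$, so $P_{\mathcal N\ominus\mathcal E}Ty=v$ and, using $u\in\mathcal E\subset\mathcal M$, $XP_{\mathcal N\ominus\mathcal E}Ty=P_{\mathcal M^\perp}v=P_{\mathcal M^\perp}Ty$. On the other hand, writing $P_{\mathcal M^\perp}y=y-P_{\mathcal M}y$ and using $\mathcal M\in\operatorname{Lat}T$ (so $TP_{\mathcal M}y\in\mathcal M$), one gets $P_{\mathcal M^\perp}TP_{\mathcal M^\perp}y=P_{\mathcal M^\perp}Ty$ as well. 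Thus $X\bigl(P_{\mathcal N\ominus\mathcal E}T|_{\mathcal N\ominus\mathcal E}\bigr)=\bigl(P_{\mathcal M^\perp}T|_{\mathcal M^\perp}\bigr)X$, completing the verification. There is no real obstacle here; the only mild subtlety is remembering that $\mathcal E\subset\mathcal M$ is exactly what makes both (2) and (3) collapse cleanly.
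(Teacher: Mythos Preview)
Your proof is correct and complete: the verification of injectivity, dense range, and the intertwining relation are all carried out cleanly, with the observation $\mathcal E\subset\mathcal M$ used exactly where it is needed. The paper itself omits the proof, stating only that the lemma ``can be easily checked directly,'' so your argument is precisely the kind of direct verification the author had in mind.
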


The following proposition is closed to \cite{takdij} and {\cite[Proposition 20]{kers}}.

\begin{proposition}\label{propcap} Suppose that $T\colon\mathcal H\to\mathcal H$ is a polynomially bounded operator,  
$\mathcal M_1$, $\mathcal M_2\in\operatorname{Lat}T$, $T|_{\mathcal M_k}\prec S$, $k=1,2$, $\mathcal M_1\vee\mathcal M_2=\mathcal H$ 
and $\mathcal M_1\cap\mathcal M_2\neq\{0\}$. Then $T$ is a.c., $T\buildrel d\over\prec S$, and $S$ is the isometric asymptote of $T$. Moreover, if $T\not\prec S$, then $T|_{\ker Y}$ is a $C_0$-operator, where $Y$ is a canonical intertwining mapping. \end{proposition}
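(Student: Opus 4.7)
My plan is to verify the four assertions in the following order: first the absolute continuity of $T$, then simultaneously items (2) and (3) by showing that the isometric asymptote $V$ of $T$ is unitarily equivalent to $S$, and finally the $C_0$-conclusion in the ``Moreover'' clause. Throughout I will write $\mathcal E=\mathcal M_1\cap\mathcal M_2$ and let $Y_k\colon\mathcal M_k\to H^2$ be the quasiaffinities realizing $T|_{\mathcal M_k}\prec S$; by Lemma~\ref{lemisoasymp} the isometric asymptote of $T|_{\mathcal M_k}$ is $S$ and $Y_k$ is its canonical intertwining mapping.

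For absolute continuity, each $T|_{\mathcal M_k}$ is a.c.\ by \cite[Proposition 16]{ker16}, since a polynomially bounded operator that is a quasiaffine transform of an a.c.\ one is itself a.c.\ (the standard WOT-argument using $H^\infty$-functional calculus on the source). Applying Lemma~\ref{acperp} to the pair $(T|_{\mathcal M_2},\mathcal E)$ gives that $P_{\mathcal M_2\ominus\mathcal E}T|_{\mathcal M_2\ominus\mathcal E}$ is a.c. By Lemma~\ref{lemc1} this operator is a quasiaffine transform of $P_{\mathcal M_1^\perp}T|_{\mathcal M_1^\perp}$, so the latter is a.c.\ by the same transfer principle. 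Lemma~\ref{acperp} applied to $(T,\mathcal M_1)$ then yields that $T$ is a.c.

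For items (2) and (3), let $(Y,V)$ denote the isometric asymptote of $T$ on $\mathcal K$. By the functoriality of the isometric asymptote construction \cite{ker89}, there are operators $Z_k\colon H^2\to\mathcal K$ with $VZ_k=Z_kS$ and $Y|_{\mathcal M_k}=Z_kY_k$. Setting $\mathcal L_k=\overline{Z_kH^2}=\overline{Y\mathcal M_k}$, the restriction $V|_{\mathcal L_k}$ is the isometric asymptote of $T|_{\mathcal M_k}$, hence $V|_{\mathcal L_k}\cong S$; moreover $\mathcal L_1\vee\mathcal L_2=\overline{Y\mathcal H}=\mathcal K$. A key observation is that $Y|_{\mathcal E}$ is injective: if $x\in\mathcal E$ satisfies $Yx=0$, then $\lim_n\|T^nx\|=\|Yx\|=0$, and since $\|Y_1x\|=\|S^nY_1x\|=\|Y_1T^nx\|\le\|Y_1\|\|T^nx\|\to0$, the injectivity of $Y_1$ gives $x=0$. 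Consequently $\mathcal L_0:=\overline{Y\mathcal E}\subset\mathcal L_1\cap\mathcal L_2$ is nonzero, and $V|_{\mathcal L_0}\cong S$ (as the asymptote of $T|_{\mathcal E}\prec S$). It remains to deduce $V\cong S$. The plan is to first rule out a nonzero unitary summand of $V$ (using that a pure shift $V|_{\mathcal L_k}$ together with $\mathcal L_1\vee\mathcal L_2=\mathcal K$ forces $V$ to be completely non-unitary) and then to bound the multiplicity: Beurling--Lax represents $\mathcal L_k=\Theta_kH^2$ with $\Theta_k\colon\mathbb D\to\mathbb C^n$ inner if $V\cong S^{(n)}$; the spanning condition forces $n\le 2$, and $n=2$ is incompatible with $\mathcal L_0\ne\{0\}$ by a $2\times 2$ determinant argument showing $\mathcal L_1\cap\mathcal L_2=\{0\}$ for generic inner column vectors spanning $H^2_2$. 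Hence $V\cong S$, which gives both (2) and (3).

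For the final assertion, assume $T\not\prec S$; then $\ker Y\ne\{0\}$, and $\ker Y\in\operatorname{Lat}T$. By the very definition of the canonical intertwining mapping, $\lim_n\|T^nx\|=\|Yx\|=0$ for $x\in\ker Y$, so $T|_{\ker Y}$ is of class $C_{0\cdot}$. To upgrade to $C_0$ I would exhibit an annihilating inner function: the quasiaffinity $P_{\mathcal M_1^\perp}|_{\mathcal M_2\ominus\mathcal E}$ from Lemma~\ref{lemc1} factors $P_{\mathcal M_2\ominus\mathcal E}T|_{\mathcal M_2\ominus\mathcal E}\prec P_{\mathcal M_1^\perp}T|_{\mathcal M_1^\perp}$, and the source is itself a quasiaffine transform of $T_{\theta_2}$, where $\theta_2$ is the inner function with $\overline{Y_2\mathcal E}=\theta_2H^2$. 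Since $\theta_2(T_{\theta_2})=\mathbb O$, transferring through these quasiaffinities and using the relation of $\ker Y$ to the compression on $\mathcal M_1^\perp$ (via the argument that $\ker Y\cap\mathcal M_k=\{0\}$, by the same injectivity argument above) should produce an inner $\varphi$, built from $\theta_1$ and $\theta_2$, with $\varphi(T)|_{\ker Y}=\mathbb O$.

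The main obstacle is the structural step in the second paragraph, showing $V\cong S$ from the information that $V$ is an a.c.\ isometry with two cyclic-shift-type invariant subspaces $\mathcal L_k$ that span $\mathcal K$ and intersect in a nonzero subspace carrying a further cyclic shift $\mathcal L_0$; in particular the cancellation of any unitary summand requires care, since $\mathcal L_k$ need not be orthogonal to the Wold unitary part. A secondary obstacle is the explicit production of the annihilating $\varphi\in H^\infty$ in step (4), where the interaction between the two different inner functions $\theta_1,\theta_2$ and the way $\ker Y$ sits transversely to $\mathcal M_1,\mathcal M_2$ needs to be exploited carefully.
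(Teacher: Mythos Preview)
Your treatment of absolute continuity is essentially the paper's: both of you pass through Lemma~\ref{lemc1} to get $T_0:=P_{\mathcal M_2\ominus\mathcal E}T|_{\mathcal M_2\ominus\mathcal E}\prec T_1:=P_{\mathcal M_1^\perp}T|_{\mathcal M_1^\perp}$ and then invoke Lemma~\ref{acperp}. The paper also records, at this stage, the crucial fact
\[
\theta(T_1)=\mathbb O,
\]
where $\theta$ is the inner function with $\mathcal J_X\mathcal E=\theta H^2$ for the quasiaffinity $X$ realizing $T|_{\mathcal M_2}\prec S$ (this uses Lemma~\ref{lemlat} to see that the induced map $X_0\colon\mathcal M_2\ominus\mathcal E\to\mathcal K_\theta$ is a quasiaffinity). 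You have all the ingredients for this identity but do not isolate it; it is the engine of the rest of the proof.

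The genuine divergence, and the real gap, is in your argument that the isometric asymptote is $S$. You attempt to work directly with the isometric asymptote $V$ and to eliminate a unitary summand by the slogan ``two shift-type invariant subspaces spanning $\mathcal K$ force $V$ to be completely non-unitary.'' This is not justified: an invariant subspace $\mathcal L$ with $V|_{\mathcal L}\cong S$ need not lie inside the shift part of the Wold decomposition (the projection onto the unitary part gives a nonzero intertwiner $S\to V_u$, which certainly exists for a.c.\ unitaries), so the spanning condition alone does not kill $V_u$. Your multiplicity-two determinant sketch is plausible once $V$ is known to be a pure shift, but you never reach that point. The paper avoids this difficulty entirely by passing to the \emph{unitary} asymptote $(Y,U)$: by \cite[Theorem~3]{ker89} and Lemma~\ref{lemisoasymp} one gets $U\cong U_{\mathbb T}$ with $\overline{Y\mathcal M_1}=\xi H^2$. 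If $\overline{Y\mathcal H}=L^2$, then $P_{\xi H^2_-}Y|_{\mathcal M_1^\perp}$ gives $T_1\buildrel d\over\prec P_{\xi H^2_-}U_{\mathbb T}|_{\xi H^2_-}$, i.e.\ $T_1$ densely intertwines with a backward shift, contradicting $\theta(T_1)=\mathbb O$. Hence $\overline{Y\mathcal H}$ is a proper $U_{\mathbb T}$-invariant subspace containing $\xi H^2$, so $\overline{Y\mathcal H}=\xi_1 H^2$ and the isometric asymptote is $S$. The annihilation $\theta(T_1)=\mathbb O$ is exactly what replaces your missing structural argument.

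For the $C_0$ assertion you are over-engineering: only one inner function is needed, not a combination of $\theta_1$ and $\theta_2$. Since $\ker Y\cap\mathcal M_1=\{0\}$ (your injectivity argument), the map $P_{\mathcal M_1^\perp}|_{\ker Y}$ is a quasiaffinity onto its closed range $\mathcal E_1\in\operatorname{Lat}T_1$, realizing $T|_{\ker Y}\prec T_1|_{\mathcal E_1}$. Now $\theta(T_1)=\mathbb O$ immediately gives $\theta(T|_{\ker Y})=\mathbb O$.
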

\begin{proof} Put $\mathcal E=\mathcal M_1\cap\mathcal M_2$. We can suppose that $\mathcal E\neq\mathcal M_2$. 
Denote by $X$ a quasiaffinity which realizes the relation $T|_{\mathcal M_2}\prec S$. There exists 
an inner function $\theta\in H^\infty$ such that $\mathcal J_X\mathcal E=\theta H^2$, where  $\mathcal J_X$ is defined by \eqref{lattice}. Put  
$$T_0=P_{\mathcal M_2\ominus\mathcal E}T|_{\mathcal M_2\ominus\mathcal E}, \ \  \ T_1= P_{\mathcal M_1^\perp}T|_{\mathcal M_1^\perp}, \ \  \text{ and } 
\ \ X_0=P_{\mathcal K_\theta}X_0|_{\mathcal M_2\ominus\mathcal E}.$$ Clearly, $X_0T_0=T_\theta X_0$.
Taking into account that $\mathcal J_X$ is a lattice isomorphism (by Lemma~\ref{lemlat}), it is easy to see that $X_0$ is a quasiaffinity. 
Therefore, $T_0$ is a.c. ({\cite[Proposition 16]{ker16}}) and $\theta(T_0)=\mathbb O$. 

By Lemma~\ref{lemc1}, $T_0\prec T_1$. By  \cite{mlak} or {\cite[Proposition 16]{ker16}} and Lemma~\ref{acperp}, $T$ is a.c.. Furthermore,  
\begin{equation}\label{thetaperp} \theta(T_1)=\mathbb O. \end{equation}

Let $(Y,U)$ be the unitary asymptote of $T$.  By {\cite[Theorem 3]{ker89}} and Lemma \ref{lemisoasymp},  
$U\cong U_{\mathbb T}$,  and $Y|_{\mathcal M_1}$ realizes the relation 
$T|_{\mathcal M_1}\prec U_{\mathbb T}|_{\xi H^2}$ for some $\xi\in L^\infty$ such that $|\xi|=1$ a.e. with respect to $m$. 

If $\operatorname{clos}Y\mathcal H=L^2$, then $P_{\xi H^2_-}Y|_{\mathcal M_1^\perp}$ realizes the relation 
$T_1\buildrel d\over\prec P_{\xi H^2_-}U_{\mathbb T}|_{\xi H^2_-}$, which contradicts to \eqref{thetaperp}. 
Thus, $\xi H^2\subset\operatorname{clos}Y\mathcal H\neq L^2$. Since $\operatorname{clos}Y\mathcal H\in\operatorname{Lat}U_{\mathbb T}$, 
we conclude that $\operatorname{clos}Y\mathcal H =\xi_1 H^2$ for some $\xi_1\in L^\infty$ such that $|\xi_1|=1$ a.e. with respect to $m$. Since $U_{\mathbb T}|_{\xi_1 H^2}\cong S$, we conclude that $S$ is the isometric asymptote of $T$. 

Put $\mathcal E_0=\ker Y$ and $\mathcal E_1=\operatorname{clos}P_{\mathcal M_1^\perp}\mathcal E_0$. 
Then $\mathcal E_0\in\operatorname{Lat}T$ and $\mathcal E_0\cap\mathcal M_1=\{0\}$. Therefore, 
 $\mathcal E_1\in\operatorname{Lat}T_1$ and $P_{\mathcal E_1}|_{\mathcal E_0}$ realizes the relation 
$T|_{\mathcal E_0}\prec T_1|_{\mathcal E_1}$. By \eqref{thetaperp}, $\theta(T|_{\mathcal E_0})=\mathbb O$. 
Thus, $T|_{\mathcal E_0}$ is a $C_0$-operator.
 \end{proof}


\section{Existence of a polynomially bounded operator with the minimal function $\exp(a\frac{z+1}{z-1})$ which is not similar to a contraction}

For $\lambda\in\Bbb D$ denote by $b_\lambda$  a Blaschke factor:
 $b_\lambda(z)=\frac{|\lambda|}{\lambda}
\frac{\lambda-z}{1-\overline\lambda z}$, $z\in\Bbb D$. For $w\in\Bbb D$ put 
\begin{equation}\label{beta}\beta_w(z) = \frac{w-z}{1-\overline w z}, \ \ \ z\in\Bbb D. \end{equation}
Clearly, $\beta_w\circ\beta_w(z) = z$ for every $z\in\Bbb D$.
 For every $\varphi\in H^\infty$ 
we have $\varphi\circ\beta_w\in H^\infty$,  
$\|\varphi\circ\beta_w\|_\infty = \|\varphi\|_\infty$, 
and 
$$b_\lambda\circ\beta_w = \zeta_{w,\lambda}b_{\beta_w(\lambda)}, \ \
\text{ where }   \zeta_{w,\lambda}\in\Bbb T.$$

For $a>0$ and  $0<\alpha<1$ put \begin{equation}\label{omegag}\omega_a(z)=\exp\Bigl(a\frac{z+1}{z-1}\Bigr) \  \text{ and }\  \ g_\alpha(z)=\exp\Bigl(-\Bigl(\frac{1+z}{1-z}\Bigr)^\alpha\Bigr), \ \ z\in\mathbb D.\end{equation}
We have $\omega_a$, $g_\alpha\in H^\infty$, $\omega_a$ is inner, and $g_\alpha$ is outer by {\cite[Example I.A.4.3.7, p. 71]{nik}}.

The following lemma can be easily proved by induction. Therefore, its proof is omitted.

\begin{lemma}\label{diffomegag} Let $a>0$, and  let $0<\alpha<1$. Let $\omega_a$ and $ g_\alpha$
be defined in \eqref{omegag}.
 Let $n\in\mathbb N$.
Then there exist functions $\psi_{\omega,n}$, $\psi_{g,n}$ analytic in $\mathbb D$ such that 
$$\omega_a^{(n)}=\omega_a\psi_{\omega,n} \ \ \text{ and }\ \ g_\alpha^{(n)}=g_\alpha\psi_{g,n}.$$
Furthermore, $$\psi_{\omega,n}(z)\!=\!\sum_{l=1}^{K_{\omega,n}}\frac{c_{\omega,n,l}}{(1-z)^{k_{\omega,n,l}}} 
 \text{ and } \psi_{g,n}(z)\!=\!\sum_{l=1}^{K_{g,n}}c_{g,n,l}\Bigl(\frac{1+z}{1-z}\Bigr)^{d_{1,l,n}}\!(1-z)^{d_{2,n,l}},$$
where $K_{\omega,n}$, $K_{g,n}$, $k_{\omega,n,l}\in\mathbb N$, $c_{\omega,n,l}$, $c_{g,n,l}$, $d_{1,l,n}$, 
$d_{2,n,l}\in\mathbb R$ .
 \end{lemma}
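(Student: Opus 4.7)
The plan is to prove both assertions simultaneously by induction on $n$, since each reduces to a routine application of the chain and product rules once we identify the ``closed class'' of auxiliary functions.

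For the base case $n=1$, I would compute directly. Writing $\omega_a'=\omega_a\cdot a\frac{d}{dz}\bigl(\frac{z+1}{z-1}\bigr)=\omega_a\cdot\frac{-2a}{(1-z)^{2}}$ gives $\psi_{\omega,1}(z)=-2a/(1-z)^{2}$, which is of the required form with $K_{\omega,1}=1$, $c_{\omega,1,1}=-2a$, $k_{\omega,1,1}=2$. Similarly, $g_\alpha'=g_\alpha\cdot\bigl(-\alpha\bigl(\tfrac{1+z}{1-z}\bigr)^{\alpha-1}\cdot\tfrac{2}{(1-z)^{2}}\bigr)$, so $\psi_{g,1}(z)=-2\alpha\bigl(\tfrac{1+z}{1-z}\bigr)^{\alpha-1}(1-z)^{-2}$, a single-term sum of the required shape with $d_{1,1,1}=\alpha-1$ and $d_{2,1,1}=-2$.

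For the induction step, assume the formulas hold at level $n$. Differentiating the identity $\omega_a^{(n)}=\omega_a\psi_{\omega,n}$ yields
$$\omega_a^{(n+1)}=\omega_a\bigl(\psi_{\omega,1}\psi_{\omega,n}+\psi_{\omega,n}'\bigr).$$
The product $\psi_{\omega,1}\psi_{\omega,n}$ is again a finite sum of terms $c/(1-z)^{k}$ with $k\in\mathbb N$ (adding exponents), and termwise differentiation of $c_{\omega,n,l}/(1-z)^{k_{\omega,n,l}}$ produces $c_{\omega,n,l}k_{\omega,n,l}/(1-z)^{k_{\omega,n,l}+1}$, again of the allowed form. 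Hence $\psi_{\omega,n+1}$ has the claimed shape. For $g_\alpha$, the same argument applied to $g_\alpha^{(n+1)}=g_\alpha(\psi_{g,1}\psi_{g,n}+\psi_{g,n}')$ works: multiplication by $\psi_{g,1}$ shifts the exponent $d_{1,\cdot,n}$ by $\alpha-1$ and $d_{2,n,\cdot}$ by $-2$, while differentiating a term $c\bigl(\tfrac{1+z}{1-z}\bigr)^{d_1}(1-z)^{d_2}$ produces, via the product rule, a sum of two terms of the same general form (one with $d_1$ shifted to $d_1-1$ and $d_2$ to $d_2-2$, one with $d_2$ shifted to $d_2-1$), both with real exponents.

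There is no genuine obstacle here; the entire content of the lemma is to identify that the class of functions $\{\sum_l c_l/(1-z)^{k_l}\}$ is closed under multiplication by $\psi_{\omega,1}$ and differentiation, and that the class $\{\sum_l c_l\bigl(\tfrac{1+z}{1-z}\bigr)^{d_{1,l}}(1-z)^{d_{2,l}}\}$ is closed under multiplication by $\psi_{g,1}$ and differentiation. The only minor care is to observe that the exponents $d_{1,l,n}$ and $d_{2,n,l}$ produced at each step remain real (they do, since each shift is by a real constant), and that the $k_{\omega,n,l}$ remain positive integers (they do, since the base case has $k_{\omega,1,1}=2$ and every operation increases $k$ by $1$ or $2$). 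This is exactly why the author writes that the proof is by easy induction and omits it.
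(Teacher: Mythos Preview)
Your proposal is correct and takes exactly the approach the paper indicates: the author states that the lemma ``can be easily proved by induction'' and omits the proof entirely. Your base-case computations and the closure argument for the inductive step are accurate, so there is nothing to add.
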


\begin{lemma}\label{diffmatrix}  Let $a>0$, and let  $0<\alpha<1$. Let $\omega_a$ and $g_\alpha$ be defined in
 \eqref{omegag}. Put
\begin{align*}\psi_{g,n,k} & = \frac{n!}{k!(n-k)!}\psi_{g,n-k}, \  \ \ 1\leq k\leq n, \\ 
\psi_{g,k,k} & =1, \ \psi_{g,n,k}=0, \  \ \ n\leq k-1, \ \  \psi_{\omega,0}=1. \end{align*}
For $M\in\mathbb N$  set $\Psi_M =[\psi_{g,n,k}]_{n,k=0}^{M-1}$ and define the functions 
$\kappa_{M,n}$, $0\leq n\leq M-1$, by the formula
$$ [\kappa_{M,n}]_{n=0}^{M-1}=\frac{\omega_a}{g_\alpha}\cdot\Psi_M^{-1}\cdot[\psi_{\omega,n}]_{n=0}^{M-1}.$$
Then $\kappa_{M,n}$ are  functions analytic in $\mathbb D$, and $\kappa_{M,n}(z)\to 0$ when $z\in(0,1)$, $z\to 1$ 
 for every $0\leq n\leq M-1$. \end{lemma}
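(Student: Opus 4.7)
The plan is to recognize $\kappa_{M,n}$ as the $n$-th derivative of the analytic function $F:=\omega_a/g_\alpha$ in $\mathbb D$, and then establish the limit by comparing the exponential decay of $\omega_a/g_\alpha$ on $(0,1)$ with the at worst polynomial growth of the auxiliary functions $\psi_{\omega,n}$, $\psi_{g,n}$.

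First I would observe that, since $g_\alpha$ is outer, it has no zeros in $\mathbb D$, so $F$ is holomorphic on $\mathbb D$. Applying the Leibniz rule to the identity $\omega_a=g_\alpha F$ and substituting the factorizations $\omega_a^{(n)}=\omega_a\psi_{\omega,n}$ and $g_\alpha^{(n-k)}=g_\alpha\psi_{g,n-k}$ supplied by Lemma~\ref{diffomegag} gives
$$\omega_a\psi_{\omega,n}=g_\alpha\sum_{k=0}^{n}\psi_{g,n,k}F^{(k)},\qquad 0\le n\le M-1.$$
In matrix form this is precisely $\Psi_M[F^{(k)}]_{k=0}^{M-1}=(\omega_a/g_\alpha)[\psi_{\omega,n}]_{n=0}^{M-1}$. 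Since $\Psi_M$ is unit lower triangular, multiplying by $\Psi_M^{-1}$ yields $\kappa_{M,n}=F^{(n)}$ for every $0\le n\le M-1$; in particular $\kappa_{M,n}$ does not depend on $M$ and is analytic in $\mathbb D$.

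Next I would extract an explicit structural form for $F^{(n)}$ by a second Leibniz expansion, now of the factorization $F=\omega_a\cdot(1/g_\alpha)$. Differentiating the identity $g_\alpha\cdot(1/g_\alpha)=1$ and an easy induction show that $(1/g_\alpha)^{(k)}=(1/g_\alpha)\tilde\psi_k$ for some $\tilde\psi_k$ that is a universal polynomial in $\psi_{g,1},\dots,\psi_{g,k}$. Hence
$$F^{(n)}=\frac{\omega_a}{g_\alpha}\sum_{k=0}^{n}\binom{n}{k}\psi_{\omega,n-k}\tilde\psi_k.$$
Using the explicit representations displayed in Lemma~\ref{diffomegag}, each $\psi_{\omega,j}(x)$ and each $\psi_{g,j}(x)$ is $O((1-x)^{-r_j})$ as $x\to 1^-$ along $(0,1)$ for some $r_j>0$, so the bracketed rational factor is majorized by $C_n(1-x)^{-N_n}$ for suitable constants $C_n,N_n$.

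Finally, on $(0,1)$ one has
$$\Bigl|\frac{\omega_a(x)}{g_\alpha(x)}\Bigr|=\exp\Bigl(-a\frac{1+x}{1-x}+\Bigl(\frac{1+x}{1-x}\Bigr)^\alpha\Bigr),$$
and because $0<\alpha<1$ the linear term dominates the sub-linear one, so $|\omega_a(x)/g_\alpha(x)|\le\exp(-a/(2(1-x)))$ for $x$ sufficiently close to $1$. This exponential decay swallows the polynomial bound $(1-x)^{-N_n}$, and therefore $\kappa_{M,n}(x)=F^{(n)}(x)\to 0$ as $x\to 1^-$, as required. The only potentially delicate step is the algebraic identification $\kappa_{M,n}=F^{(n)}$ through the triangular matrix $\Psi_M$; once this reduction is in place, the asymptotic inequality is a routine consequence of the assumption $\alpha<1$.
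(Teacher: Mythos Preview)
Your argument is correct, and the asymptotic endgame (exponential decay of $\omega_a/g_\alpha$ on $(0,1)$ beating any power of $(1-x)^{-1}$) is exactly the mechanism the paper uses. Where you differ is in the algebraic reduction: the paper never identifies $\kappa_{M,n}$ with the derivative $F^{(n)}$ of $F=\omega_a/g_\alpha$. Instead it argues directly that $\Psi_M$, being unit lower triangular with entries built from the $\psi_{g,j}$, has an inverse whose entries are polynomial combinations of the $\psi_{g,j}$; multiplying by $[\psi_{\omega,n}]$ therefore yields expressions of the form $\sum_l c_l\bigl(\tfrac{1+z}{1-z}\bigr)^{d_{1,l}}(1-z)^{d_{2,l}}$, and after the substitution $t=\tfrac{1+z}{1-z}$ the factor $\omega_a/g_\alpha=\exp(-at+t^\alpha)$ kills any such term as $t\to+\infty$.

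Your identification $\kappa_{M,n}=F^{(n)}$ is a genuine gain in insight: it explains \emph{why} the quantities $\kappa_{M,n}$ are independent of $M$ and gives analyticity in $\mathbb D$ for free, without having to track the structure of $\Psi_M^{-1}$. Your second Leibniz expansion $F=\omega_a\cdot(1/g_\alpha)$ then reproduces, in slightly different bookkeeping, the same polynomial-in-$(1-x)^{-1}$ bound that the paper obtains from the explicit form of $\Psi_M^{-1}[\psi_{\omega,n}]$. The paper's route is shorter on the page; yours is more transparent about what is actually being computed.
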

\begin{proof} We have $\Psi_M$ is a lower triangular matrix-function, and the elements of the  main diagonal of $\Psi_M$ are equal to $1$. Therefore,  $\Psi_M$ is invertible. The elements of the matrix-function 
$\Psi_M^{-1}\cdot[\psi_{\omega,n}]_{n=0}^{M-1}$ have the form 
$$\sum_{l=1}^{K}c_l\Bigl(\frac{1+z}{1-z}\Bigr)^{d_{1,l}}(1-z)^{d_{2,l}}$$ for some $K\in\mathbb N$ and $c_l$, $d_{1,l}$, 
$d_{2,l}\in\mathbb R$ (depended on  $\alpha$, $a$, $M$, and the indices of the considered element of the matrix). 

Set $t=\frac{1+z}{1-z}$. Then $1-z=\frac{2}{1+t}$, and $\frac{\omega_a(z)}{g_\alpha(z)} =\exp(-at+t^\alpha)$. 
Furthermore, $t\to+\infty$ when $z\in(0,1)$, $z\to 1$. We have $t^{d_1}(1+t)^{d_2}\exp(-at+t^\alpha)\to 0$ when 
$t\to+\infty$ for every $d_1$, $d_2\in\mathbb R$. The conclusion of the lemma follows from this relation.
\end{proof} 

The following lemma is an emphasizing of {\cite[Lemma 3.5]{gam16}}.

\begin{lemma}\label{lemma3.5} 
Suppose  that  $\Lambda\subset\mathbb D$ 
is finite, $1\leq k_\lambda<\infty$ for every $\lambda\in\Lambda$,
and $B=\prod_{\lambda\in\Lambda}b_{\lambda}^{k_\lambda}$. 
Then there exists $C>0$ which depends on $B$ such that
$$\operatorname{dist}(\varphi, (B\circ\beta_w) H^\infty)\leq 
C\max_{\lambda\in\Lambda, 0\leq k\leq k_\lambda-1}|\varphi^{(k)}(\beta_w(\lambda))|$$
 for every $\varphi\in  H^\infty$ and  every $w\in\mathbb D$.
\end{lemma}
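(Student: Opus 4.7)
The plan is to use the fact that composition with the involution $\beta_w$ is an isometry of $H^\infty$ in order to eliminate $w$ from the statement, then to apply the standard finite-dimensional distance estimate for a fixed Blaschke product, and finally to pay a controlled price via the chain rule when translating derivatives of $\varphi \circ \beta_w$ at $\lambda$ into derivatives of $\varphi$ at $\beta_w(\lambda)$.

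The first step is the crucial one. Since $\beta_w \circ \beta_w = \mathrm{id}$, the map $f \mapsto f \circ \beta_w$ is an isometric bijection of $H^\infty$ onto itself, and it sends $(B \circ \beta_w) H^\infty$ onto $B H^\infty$. Consequently,
$$\operatorname{dist}(\varphi, (B \circ \beta_w) H^\infty) = \operatorname{dist}(\varphi \circ \beta_w, B H^\infty).$$
One cannot simply apply the standard Blaschke-product estimate to $B \circ \beta_w$ directly: its zeros $\beta_w(\lambda)$ can be pushed arbitrarily close to $\partial\mathbb D$, so the resulting constant would depend on $w$ and fail to be uniform.

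Next, I would invoke the classical fact that for the fixed Blaschke product $B$ of degree $n = \sum_\lambda k_\lambda$, the Hermite evaluation $[\psi] \mapsto (\psi^{(k)}(\lambda))_{\lambda, k}$ is a Banach-space isomorphism $H^\infty / BH^\infty \to \mathbb C^n$: its kernel is precisely $BH^\infty$ because $B$ is inner, and surjectivity is Hermite interpolation. Equivalence of norms in finite dimension then yields a constant $C_1 = C_1(B)$ with
$$\operatorname{dist}(\psi, BH^\infty) \leq C_1 \max_{\lambda\in\Lambda,\, 0 \leq k \leq k_\lambda - 1} |\psi^{(k)}(\lambda)|$$
for every $\psi \in H^\infty$.

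Finally, applying this to $\psi = \varphi \circ \beta_w$ and expanding via Faà di Bruno, $(\varphi \circ \beta_w)^{(k)}(\lambda)$ becomes a polynomial in $\{\varphi^{(j)}(\beta_w(\lambda))\}_{1 \leq j \leq k}$ whose coefficients are products of the derivatives $\beta_w^{(i)}(\lambda)$. A direct computation gives $\beta_w^{(j)}(z) = (|w|^2-1)\,j!\,\overline{w}^{\,j-1}(1-\overline{w}z)^{-(j+1)}$ for $j \geq 1$, whence $|\beta_w^{(j)}(\lambda)| \leq j!/(1-|\lambda|)^{j+1}$ uniformly in $w \in \mathbb D$ for each $\lambda \in \Lambda$. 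Combining the three steps gives the desired bound with $C$ depending only on $B$. The sole subtlety is the uniformity of the constant in $w$, and this is entirely handled by the composition reduction in the first step.
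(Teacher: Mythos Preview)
Your proof is correct and follows essentially the same approach as the paper's: both reduce to the fixed Blaschke product $B$ via the isometry $f\mapsto f\circ\beta_w$, invoke the finite-dimensional distance estimate $\operatorname{dist}(\psi,BH^\infty)\leq C_1\max|\psi^{(k)}(\lambda)|$ (the paper cites \cite[Lemma~3.5]{gam16}), and then bound $(\varphi\circ\beta_w)^{(k)}(\lambda)$ in terms of $\varphi^{(j)}(\beta_w(\lambda))$ using the chain rule with coefficients controlled uniformly in $w$ by $(1-\max_\Lambda|\lambda|)^{-(k+l)}$ (the paper cites \cite[Lemma~3.3]{gam16}, you compute $\beta_w^{(j)}$ explicitly). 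The only difference is that your argument is self-contained rather than citing the two lemmas from \cite{gam16}.
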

\begin{proof} We have $\operatorname{dist}(\varphi, (B\circ\beta_w) H^\infty)=
\operatorname{dist}(\varphi\circ\beta_w, B H^\infty)$  for every $ \varphi\in  H^\infty$ and  every $w\in\mathbb D$. 
By {\cite[Lemma 3.5]{gam16}}, there exists $C>0$ (which depends on $B$) such that
$$\operatorname{dist}(\psi, B H^\infty)\leq 
C\max_{\lambda\in\Lambda, 0\leq k\leq k_\lambda-1}|\psi^{(k)}(\lambda)|
\ \ \text{ for every } \psi\in  H^\infty.$$
We need to estimate $|(\varphi\circ\beta_w)^{(k)}(\lambda)|$ for $1\leq k\leq k_\lambda-1$.
By {\cite[Lemma 3.3]{gam16}},  there exist functions 
$c_{kl}\colon\mathbb D\to\mathbb C$
such that $\sup_{\mathbb D}|c_{kl}|<\infty$ for every $k\geq 1$, 
$0\leq l\leq k-1$,
and for every analytic function $\varphi\colon\mathbb D\to\mathbb C$ and every 
$w\in\mathbb D$
$$(\varphi\circ\beta_w)^{(k)}(z) = \sum_{l=1}^k
\varphi^{(l)}(\beta_w(z))\frac{c_{k,k-l}(w)}{(1-\overline wz)^{k+l}}.$$ Therefore, 
\begin{gather*}|(\varphi\circ\beta_w)^{(k)}(\lambda)|\leq
\sum_{l=1}^k|\varphi^{(l)}(\beta_w(\lambda))|\frac{|c_{k,k-l}(w)|}{|1-\overline w\lambda|^{k+l}}\\\leq\!
\max_{\lambda\in\Lambda, 0\leq k\leq k_\lambda-1}\!|\varphi^{(k)}(\beta_w(\lambda))| \cdot
\max_{\lambda\in\Lambda}(k_\lambda-1)\cdot \!
\sup_{{1\leq l\leq k, 1\leq k\leq k_\lambda-1,}\atop{w\in\mathbb D}}\!
\frac{|c_{k,k-l}(w)|}{(1-\max_{\lambda\in\Lambda}|\lambda|)^{k+l}}\end{gather*} for $1\leq k\leq k_\lambda-1$.
\end{proof}

\begin{theorem}\label{theoremnew} Suppose that  $C>0$, $a>0$,  $0<\alpha<1$, and $\{B_N\}_N$ is a sequence of finite Blaschke products with zeros from $(0,1)$. Then there exists a sequence $\{\rho_N\}_N\subset(0,1)$ such that 
for any $w\in[\rho_N,1)$ there exists $\eta_{N,w}\in H^\infty$ such that 
\begin{equation}\label{incb}\operatorname{dist}(\eta_{N,w}, (B_N\circ\beta_w) H^\infty)\leq C \ \ \text{  and } \ \ \omega_a-g_\alpha\eta_{N,w}\in (B_N\circ\beta_w) H^\infty,\end{equation}
where $\beta_w$ is defined in \eqref{beta}, and $\omega_a$ and $g_\alpha$ are defined in \eqref{omegag}. 
\end{theorem}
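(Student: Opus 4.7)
The plan is to build $\eta_{N,w}$ explicitly as a Hermite interpolation polynomial whose derivative data at the zeros of $B_N\circ\beta_w$ are forced by the second requirement of \eqref{incb}, and then invoke the decay supplied by Lemma~\ref{diffmatrix} to push the distance below $C$ once $w$ is close enough to $1$. Write $\{\lambda_j\}_{j=1}^r\subset(0,1)$ for the zeros of $B_N$ with multiplicities $\{k_j\}$ and set $\mu_j(w)=\beta_w(\lambda_j)=(w-\lambda_j)/(1-w\lambda_j)$. For $w\in(\max_j\lambda_j,1)$ the points $\mu_j(w)$ lie in $(0,1)$ and satisfy $\mu_j(w)\to 1^-$ as $w\to 1^-$; moreover the zeros of $B_N\circ\beta_w$ are precisely the $\mu_j(w)$ with the same multiplicities $k_j$.

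Since $B_N\circ\beta_w$ is a finite Blaschke product, an $H^\infty$-function lies in $(B_N\circ\beta_w)H^\infty$ if and only if it vanishes to order at least $k_j$ at each $\mu_j$. Hence the second clause of \eqref{incb} is equivalent to prescribing the values $\eta_{N,w}^{(k)}(\mu_j)$ for $0\le k\le k_j-1$. Expanding $(g_\alpha\eta)^{(k)}$ by Leibniz and invoking Lemma~\ref{diffomegag}, after division by $g_\alpha(\mu_j)\neq 0$ (since $g_\alpha$ is outer) the conditions at $\mu_j$ become exactly the triangular system handled by Lemma~\ref{diffmatrix} with $M=k_j$; its unique solution is
\[
\eta_{N,w}^{(k)}(\mu_j)=\kappa_{k_j,k}(\mu_j(w)),\qquad 0\le k\le k_j-1.
\]

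I would then define $\eta_{N,w}$ to be the unique polynomial of degree less than $\sum_j k_j$ meeting these Hermite conditions at the points $\{\mu_j(w)\}$; a polynomial is automatically in $H^\infty$. By construction $\omega_a-g_\alpha\eta_{N,w}\in H^\infty$ vanishes to order at least $k_j$ at each $\mu_j$, so the quotient by the finite Blaschke product $B_N\circ\beta_w$ remains in $H^\infty$, giving the second clause of \eqref{incb}. For the first clause, Lemma~\ref{lemma3.5} applied to $B=B_N$ and $\varphi=\eta_{N,w}$ furnishes a constant $C'_N>0$ depending only on $B_N$ with
\[
\operatorname{dist}\bigl(\eta_{N,w},(B_N\circ\beta_w)H^\infty\bigr)\le C'_N\max_{j,\,0\le k\le k_j-1}\bigl|\kappa_{k_j,k}(\mu_j(w))\bigr|.
\]
By Lemma~\ref{diffmatrix} each $\kappa_{k_j,k}(z)\to 0$ as $z\to 1^-$ along $(0,1)$, and since only finitely many $(j,k)$ pairs are involved and $\mu_j(w)\to 1^-$, I can choose $\rho_N\in(\max_j\lambda_j,1)$ so large that the right-hand side is at most $C$ for every $w\in[\rho_N,1)$.

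The principal technical content is the reduction of the Hermite conditions at $\mu_j(w)$ to the triangular system of Lemma~\ref{diffmatrix}; the decisive input is the decay of $\omega_a/g_\alpha=\exp(-at+t^\alpha)$ along $(0,1)$ approaching $1$, which holds precisely because $\alpha<1$ and which forces all the interpolation data to shrink to zero. No other obstacle seems serious: the Hermite polynomial is well defined, the divisibility step is an elementary property of finite Blaschke products in $H^\infty$, and Lemma~\ref{lemma3.5} supplies a constant $C'_N$ that is uniform in $w$.
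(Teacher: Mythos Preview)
Your argument is correct and follows essentially the same route as the paper: you reduce the inclusion in \eqref{incb} to Hermite interpolation conditions at the zeros $\beta_w(\lambda_j)$, solve those conditions via the triangular system of Lemma~\ref{diffmatrix} to obtain $\eta_{N,w}^{(k)}(\beta_w(\lambda_j))=\kappa_{k_j,k}(\beta_w(\lambda_j))$, and then combine Lemma~\ref{lemma3.5} with the decay $\kappa_{k_j,k}(z)\to 0$ along $(0,1)$ to pick $\rho_N$. The only cosmetic difference is that you realize $\eta_{N,w}$ concretely as the Hermite interpolation polynomial, whereas the paper simply invokes the existence of some $\eta_{N,w}\in H^\infty$ meeting the finitely many jet conditions.
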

\begin{proof} Let $N$ be fixed. Denote by $C_N$ a constant  from Lemma~\ref{lemma3.5} applied to 
$B_N=\prod_{\lambda\in\Lambda}b_{\lambda}^{k_\lambda}$. 
By Lemma~\ref{diffmatrix}, there exists $\rho_{1N}\in(0,1)$ such that
$|\kappa_{k_\lambda,k}(z)|\leq C/C_N$ for every
$z\in[\rho_{1N},1)$, $0\leq k\leq k_\lambda-1$, $\lambda\in\Lambda$. 

Since $\beta_w(\lambda)\to 1$ when $w\to 1$ for every $\lambda \in \mathbb D$, 
 and $\beta_w(\lambda)\in(0,1)$ when $0<\lambda<w<1$, there exists 
$\rho_N\in(0,1)$ such that   $\beta_w(\lambda)\in[\rho_{1N},1)$ when $w\in[\rho_N,1)$ for every $\lambda\in\Lambda$. 

 Since $\Lambda$ is a finite set, and $k_\lambda<\infty$ for every $\lambda\in\Lambda$, we have that for every $w\in\mathbb D$  there exists $\eta_{N,w}\in H^\infty$ such that 
\begin{equation}\label{eta}\eta_{N,w}^{(k)}(\beta_w(\lambda))=\kappa_{k_\lambda,k}(\beta_w(\lambda))\ \text{ for every } \ \lambda\in\Lambda \ \text{ and }\ 0\leq k\leq k_\lambda-1.\end{equation} 
By Lemma~\ref{lemma3.5}, $\operatorname{dist}(\eta_{N,w}, (B\circ\beta_w) H^\infty)\leq C$ for $w\in[\rho_N, 1)$. 

The inclusion in \eqref{incb} is fulfilled if and only if  
\begin{equation}\label{diffb}\omega_a^{(k)}(\beta_w(\lambda))-(g_\alpha\eta_{N,w})^{(k)}(\beta_w(\lambda))=0 \ \ \text{ for all } 
\lambda\in\Lambda,  \ 0\leq k\leq k_\lambda-1.\end{equation}
Clearly, $$(g_\alpha\eta_{N,w})^{(n)}(z)=\sum_{k=0}^n\frac{n!}{k!(n-k)!}g_\alpha^{(n-k)}(z)\eta_{N,w}^{(k)}(z) \ \text{ for } \ n\in\mathbb N \ \text{ and } z\in\mathbb D.$$
Therefore, $$[(g_\alpha\eta_{N,w})^{(k)}(z)]_{k=0}^{k_\lambda-1}=g_\alpha(z)\cdot\Psi_{k_\lambda}(z)\cdot[\eta_{N,w}^{(k)}(z)]_{k=0}^{k_\lambda-1}$$ for every $z\in\mathbb D$, where $\Psi_{k_\lambda}$ is defined in Lemma~\ref{diffmatrix}. 
By \eqref{eta} and  Lemmas~\ref{diffomegag} and \ref{diffmatrix}, 
\begin{align*}[(g_\alpha\eta_{N,w})^{(k)}(z)]_{k=0}^{k_\lambda-1} &=g_\alpha(z)\cdot\Psi_{k_\lambda}(z)\cdot[\kappa_{k_\lambda,k}(z)]_{k=0}^{k_\lambda-1}= \omega_a(z)\cdot[\psi_{\omega,k}(z)]_{k=0}^{k_\lambda-1} \\ &  = [\omega_a^{(k)}(z)]_{k=0}^{k_\lambda-1}\ \ \text{ when } \ z=\beta_w(\lambda), \  \lambda\in\Lambda. \end{align*} 
Thus, the equalities \eqref{diffb} are fulfilled, and  the inclusion in \eqref{incb} is proved.
\end{proof}

The following theorem is  a version of {\cite[Theorem 3.8]{gam16}}. The condition \eqref{eq3.6} from the theorem is 
the {\it generalized Carleson condition}, see {\cite[Theorem II.C.3.2.14, p. 164]{nik}}.

\begin{theorem}\label{theorem3.8} Suppose  that $C>0$, $\{\rho_N\}_N\subset(0,1)$, 
$g \in H^\infty$, and $g^{(k)}(r)\to 0$ when 
$r\in(0,1)$, $r\to 1$, for every $k\geq 0$.
Furthermore, suppose  that 
 $B_N$ are finite Blaschke products with zeros from $(0,1)$, 
$\varphi_N \in H^\infty$, $\varphi_N(\lambda)\neq 0$ 
for every $\lambda\in\Bbb D$ such that $B_N(\lambda)=0$, 
 for every index $N$.
Then there exist $\delta>0$ and sequences of $w_N\in[\rho_N,1)$, 
of $\zeta_N\in\mathbb T$, and 
of $\psi_N \in H^\infty$ such that 
the product $\prod_N\zeta_NB_N\circ\beta_{w_N}$ converges,
\begin{equation}\label{eq3.6}\bigl|\prod_N\zeta_N(B_N\circ\beta_{w_N})(z)\bigr|\geq\delta\inf_N|(B_N\circ\beta_{w_N})(z)|
 \ \ \text{ for every } \ z\in\mathbb D, \end{equation}
$g-\psi_N\cdot\varphi_N\circ\beta_{w_N}\in (B_N\circ\beta_{w_N}) H^\infty$,
 and 
$\operatorname{dist}(\psi_N, (B_N\circ\beta_{w_N}) H^\infty)\leq C$.
\end{theorem}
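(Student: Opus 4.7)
The plan is to construct the sequences $\{w_N\},\{\zeta_N\},\{\psi_N\}$ inductively, each $w_N$ pushed close enough to $1$ that the interpolation problem for $\psi_N$ has a solution meeting the distance bound, the products $B_N\circ\beta_{w_N}$ accumulate summably to the boundary, and the pseudohyperbolic separation of their zero sets yields the generalized Carleson condition \eqref{eq3.6}.

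Write $B_N=\prod_{\lambda\in\Lambda_N}b_\lambda^{k_\lambda}$ with $\Lambda_N\subset(0,1)$ finite, and let $C_N$ denote the constant from Lemma~\ref{lemma3.5} applied to $B_N$ (depending only on $B_N$). For any $w\in(0,1)$, the target inclusion $g-\psi(\varphi_N\circ\beta_w)\in(B_N\circ\beta_w)H^\infty$ translates, at each zero $\mu=\beta_w(\lambda)$, into the Leibniz-triangular system
\[
\sum_{k=0}^{j}\binom{j}{k}\psi^{(k)}(\mu)(\varphi_N\circ\beta_w)^{(j-k)}(\mu)=g^{(j)}(\mu),\qquad 0\le j\le k_\lambda-1,
\]
whose diagonal is $\varphi_N(\lambda)\ne 0$. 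The system uniquely determines prescribed values $v_{\lambda,j}(w)$ for $\psi^{(j)}(\mu)$. A standard Hermite interpolation polynomial produces $\psi_{N,w}\in H^\infty$ realizing those derivatives, and Lemma~\ref{lemma3.5} yields $\operatorname{dist}(\psi_{N,w},(B_N\circ\beta_w)H^\infty)\le C_N\max_{\lambda,j}|v_{\lambda,j}(w)|$.

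To drive the right side below $C$, I would observe that each $v_{\lambda,j}(w)$ is a polynomial combination of $g^{(i)}(\mu)$ $(i\le j)$ with coefficients built from $(\varphi_N\circ\beta_w)^{(l)}(\mu)$; by \cite[Lemma~3.3]{gam16} and $1-w\mu\asymp(1-w)$, these coefficients are polynomials in $(1-w)^{-1}$. Since $1-\mu\asymp 1-w$, it suffices to show $g^{(k)}(r)=o((1-r)^L)$ as $r\to 1$ for every fixed $k,L\ge 0$. This upgrade of the hypothesis follows by iterated integration: each $g^{(k+1)}$ is bounded on $[1-\varepsilon,1)$ (its radial limit at $1$ being $0$), so $g^{(k)}(r)=-\int_r^1 g^{(k+1)}(s)\,ds$ immediately gives $g^{(k)}(r)=o(1-r)$, and iterating $L$ times yields $o((1-r)^L)$. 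Hence for each $N$, choosing $w$ close enough to $1$ achieves $\max|v_{\lambda,j}(w)|\le C/C_N$, so $\operatorname{dist}(\psi_{N,w},(B_N\circ\beta_w)H^\infty)\le C$.

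Now the induction on $N$. Given $w_1,\ldots,w_{N-1}$, pick $w_N\in[\rho_N,1)$ close enough to $1$ that, in addition to the above smallness: (i) $\sum_{\lambda\in\Lambda_N}k_\lambda(1-\beta_{w_N}(\lambda))\le 2^{-N}$, making the cumulative Blaschke sum finite; (ii) $|(B_{N'}\circ\beta_{w_{N'}})(\beta_{w_N}(\lambda))|\ge 1-2^{-N}$ for every $N'<N$ and $\lambda\in\Lambda_N$ (possible because the zeros of each already-fixed $B_{N'}\circ\beta_{w_{N'}}$ lie in a compact subset of $(0,1)$, so its modulus tends to $1$ at points approaching $1$); (iii) symmetrically, $|(B_N\circ\beta_{w_N})(\beta_{w_{N'}}(\lambda'))|\ge 1-2^{-N}$ for every $N'<N$ and $\lambda'\in\Lambda_{N'}$ (possible since $|b_{\beta_{w_N}(\lambda)}(z)|\to 1$ at each fixed $z\ne 1$ as $w_N\to 1$). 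Conditions (ii)--(iii) jointly force $\prod_{N'\ne N}|(B_{N'}\circ\beta_{w_{N'}})(\mu)|\ge\delta:=\prod_{k\ge 1}(1-2^{-k})>0$ at every zero $\mu$ of any $B_N\circ\beta_{w_N}$, and by the standard zero-set characterization \cite[Theorem~II.C.3.2.14]{nik} this upgrades to \eqref{eq3.6} globally on $\mathbb D$. Finally, choosing $\zeta_N\in\mathbb T$ with $\zeta_N(B_N\circ\beta_{w_N})(0)>0$ ensures, in combination with (i), convergence of $\prod_N\zeta_N B_N\circ\beta_{w_N}$ in $H^\infty$.

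The principal obstacle is the decay analysis of $v_{\lambda,j}(w)$: the hypothesis ``$g^{(k)}(r)\to 0$'' must be boot-strapped to ``$g^{(k)}(r)=o((1-r)^L)$ for every $L$'' before the growing interpolation coefficients can be absorbed. A secondary technical point is the passage from the zero-set lower bound to the Carleson inequality on all of $\mathbb D$, handled by the classical characterization cited above.
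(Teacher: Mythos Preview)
Your reconstruction is essentially the argument the paper defers to: the paper's proof merely says to run the proof of \cite[Theorem~3.8]{gam16} with $r_N$ replaced by $\max(r_N,\rho_N)$, and you have reproduced that inductive scheme (choose $w_N$ close enough to $1$ for the interpolation bound, the Blaschke summability, and the pairwise separation conditions). The overall strategy and the individual steps match.

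Two small points deserve tightening. First, the bootstrapping $g^{(k)}(r)=o((1-r)^L)$ is correct, but your iteration as written only shows $g^{(k)}=o(1-r)$ from $g^{(k+1)}\to 0$; to reach $o((1-r)^L)$ you need the inductive version (from $g^{(k+1)}=o((1-r)^L)$ conclude $g^{(k)}=o((1-r)^{L+1})$), which works the same way but should be stated explicitly. Second, your claim that $\prod_{N'\ne N}|(B_{N'}\circ\beta_{w_{N'}})(\mu)|\ge\prod_{k\ge1}(1-2^{-k})$ is not quite what conditions (ii)--(iii) give: for $N'<N$ the bound is $1-2^{-N}$, contributing a factor $(1-2^{-N})^{N-1}$, while for $N'>N$ the bound is $1-2^{-N'}$. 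Since $\inf_N(1-2^{-N})^{N-1}>0$ and $\prod_{k}(1-2^{-k})>0$, the conclusion still holds, but the constant $\delta$ should be adjusted accordingly. Neither point affects the validity of the approach.
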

\begin{proof} The proof of {\cite[Theorem 3.8]{gam16}} is beginned from the definition of the sequence
$\{r_N\}_N\subset(0,1)$. To prove the present theorem, it needs to replace $r_N$ by $\max(r_N,\rho_N)$ and to repeat 
the proof of  {\cite[Theorem 3.8]{gam16}}.
\end{proof}




The following theorem is a version of {\cite[Theorem 7.1]{gam16}}.

\begin{theorem}\label{theorem7.1} Let $a>0$, and let  $0<\alpha<1$. Let $\omega_a$ and $g_\alpha$ be defined in
\eqref{omegag}. Then there exist a function $\eta\in H^\infty$, 
an operator $T\colon\mathcal H\to\mathcal H$,  
a Blaschke product $B$ with zeros from $(0,1)$, 
and quasiaffinities $X\colon\mathcal H\to\mathcal K_B$, $Y\colon\mathcal K_B\to\mathcal H$
such that \begin{equation}\label{eqnew}\omega_a-g_\alpha\eta\in BH^\infty,\end{equation}
$T$ is polynomially bounded, $T$ is not similar to a contraction, 
\begin{equation}\label{eqtheorem7.1}XT=T_BX,  \ \ \ YT_B=TY, \ \ \text{ and } \ \ XY=g_\alpha(T_B),\end{equation} 
where $T_B$ is defined in \eqref{1.7}.\end{theorem}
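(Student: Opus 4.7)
The plan is to reproduce the construction of \cite[Theorem 7.1]{gam16} with two adjustments: fix the outer multiplier to be $g_\alpha$ from the outset, and force the translation parameters $w_N$ to exceed the thresholds $\rho_N$ from Theorem~\ref{theoremnew}. This then lets me produce the additional function $\eta$ satisfying $\omega_a - g_\alpha \eta \in B H^\infty$ by patching the local $\eta_{N,w_N}$ together under the generalized Carleson condition.

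Concretely, I would first invoke \cite[Theorem 6.3]{gam16} to obtain the perturbed Pisier sequence: finite Blaschke products $B_N$ with zeros in $(0,1)$, finite-dimensional Hilbert spaces $\mathcal{H}_N$ carrying operators $R_N$ that are uniformly polynomially bounded but not uniformly completely polynomially bounded, quasiaffinities $X_N\colon\mathcal{H}_N\to\mathcal{K}_{B_N}$ and $Y_N\colon\mathcal{K}_{B_N}\to\mathcal{H}_N$ intertwining $R_N$ with $T_{B_N}$, and functions $\varphi_N\in H^\infty$ nonvanishing at the zeros of $B_N$, such that $X_N Y_N = \varphi_N(T_{B_N})$. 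Next, apply Theorem~\ref{theoremnew} to $\{B_N\}$ with some fixed $C$ (say $C=1$), producing the thresholds $\{\rho_N\}$ and, for every $w\in[\rho_N,1)$, an associated $\eta_{N,w}\in H^\infty$. Then apply Theorem~\ref{theorem3.8} with $g=g_\alpha$ — whose derivatives tend to $0$ along $(0,1)$ by Lemma~\ref{diffomegag}, since $g_\alpha(r)\to 0$ faster than any power of $\frac{1+r}{1-r}$ — the data $\{B_N,\varphi_N\}$, and the thresholds $\{\rho_N\}$. This produces $\delta>0$, parameters $w_N\in[\rho_N,1)$, unimodular $\zeta_N\in\mathbb{T}$, and functions $\psi_N\in H^\infty$ such that $B=\prod_N \zeta_N (B_N\circ\beta_{w_N})$ converges, the generalized Carleson condition \eqref{eq3.6} holds, $g_\alpha - \psi_N\cdot(\varphi_N\circ\beta_{w_N})\in(B_N\circ\beta_{w_N})H^\infty$, and $\operatorname{dist}(\psi_N,(B_N\circ\beta_{w_N})H^\infty)\leq C$. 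With this data the operator $T$ on $\mathcal{H}=\bigoplus_N\mathcal{H}_N$ and its intertwiners $X\colon\mathcal{H}\to\mathcal{K}_B$, $Y\colon\mathcal{K}_B\to\mathcal{H}$ are assembled exactly as in \cite[Theorem 7.1]{gam16}; polynomial boundedness of $T$, failure of similarity to a contraction, quasiaffinity of $X$ and $Y$, and the identity $XY=g_\alpha(T_B)$ in \eqref{eqtheorem7.1} follow as in that proof.

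It remains to produce $\eta$ satisfying \eqref{eqnew}. Because $B$ satisfies the generalized Carleson condition and the collection $\{\eta_{N,w_N}\}$ is uniformly bounded modulo $(B_N\circ\beta_{w_N})H^\infty$ (with distance $\leq C$, by Theorem~\ref{theoremnew}), a standard interpolation argument (\cite[Theorem II.C.3.2.14]{nik}) yields an $\eta\in H^\infty$ with $\eta-\eta_{N,w_N}\in(B_N\circ\beta_{w_N})H^\infty$ for every $N$. Then for every $N$,
\[
\omega_a - g_\alpha\eta = (\omega_a - g_\alpha\eta_{N,w_N}) + g_\alpha(\eta_{N,w_N}-\eta) \in (B_N\circ\beta_{w_N})H^\infty,
\]
the first summand lying in this ideal by Theorem~\ref{theoremnew}. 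A second application of the generalized Carleson condition identifies $\bigcap_N(B_N\circ\beta_{w_N})H^\infty$ with $B H^\infty$, giving \eqref{eqnew}.

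The main obstacle is the compatibility between the two interpolation stages: the $w_N$ chosen through Theorem~\ref{theorem3.8} must simultaneously serve the $\psi_N$-construction inherited from \cite{gam16} \emph{and} the patching of the $\eta_{N,w_N}$ that yields $\eta$. This is precisely why Theorem~\ref{theoremnew} is phrased so that the thresholds $\rho_N$ depend only on the $B_N$, prior to any choice of $w_N$: the single sequence $\{w_N\}$ produced by Theorem~\ref{theorem3.8} then automatically provides valid $\eta_{N,w_N}$ to patch. The other nontrivial point, peripheral but essential, is verifying via Lemma~\ref{diffomegag} that the specific outer function $g_\alpha$ meets the derivative hypothesis required to invoke Theorem~\ref{theorem3.8}.
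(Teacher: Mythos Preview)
Your proposal is correct and matches the paper's proof essentially step for step: invoke \cite[Theorem~6.3]{gam16} for $\{T_N\}$, $\{B_N\}$, $\{X_N\}$, $\{Y_N\}$, $\{\varphi_N\}$, feed the thresholds $\{\rho_N\}$ from Theorem~\ref{theoremnew} into Theorem~\ref{theorem3.8} with $g=g_\alpha$ to obtain $w_N\in[\rho_N,1)$ and the generalized Carleson condition \eqref{eq3.6}, assemble $B=\prod_N\zeta_N B_N\circ\beta_{w_N}$, $T=\bigoplus_N\beta_{w_N}(T_N)$, $X$, $Y$ exactly as in \cite[Theorem~7.1]{gam16}, and then patch the local $\eta_{N,w_N}$ via \cite[Theorem~II.C.3.2.14]{nik} to obtain the global $\eta$. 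Your explicit check that $g_\alpha^{(k)}(r)\to 0$ through Lemma~\ref{diffomegag} is a detail the paper leaves tacit.
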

\begin{proof}
Let $\{T_N\}_N$ and $\{B_N\}_N$ be
the sequences of operators 
 and of finite Blaschke products with zeros
from $(0,1)$ from {\cite[Theorem 6.3]{gam16}}, respectively. 
Let $C>0$ be fixed. Denote by $\mathcal H_N$ 
the finite dimensional spaces on which $T_N$ acts. 
There exist invertible
operators $X_N\colon\mathcal H_N\to\mathcal  K_{B_N}$, 
$Y_N\colon\mathcal  K_{B_N}\to\mathcal  H_N$ such that
$X_NT_N=T_{B_N}X_N$, $Y_NT_{B_N}=T_NY_N$,
$\|X_N\|\leq C$, $\|Y_N\|\leq C$. 
By [SFBK, Theorem X.2.10], there exist functions $\varphi_N\in H^\infty$
such that $X_NY_N=\varphi_N(T_{B_N})$. 
Note that 
 $\varphi_N(\lambda)\neq 0$ for every $\lambda\in\Bbb D$
such that $B_N(\lambda)= 0$, for every index $N$.

Let $\{\rho_N\}_N$ be the sequence from Theorem~\ref{theoremnew} applied to $C$, $a$, $\alpha$, and $\{B_N\}_N$.
 Applying Theorem~\ref{theorem3.8} to $C$, $\{\rho_N\}_N$, $g_\alpha$, sequences of  $B_N$ and of $\varphi_N$ we obtain 
$\delta>0$ and sequences of $w_N\in[\rho_N,1)$, of
$\zeta_N\in\Bbb T$ and of $\psi_N \in H^\infty$
 which satisfy the conclusion of  Theorem~\ref{theorem3.8}.
Put $$B=\prod_N\zeta_NB_N\circ\beta_{w_N} \ \ \text{ and } 
\ \ T=\bigoplus_N\beta_{w_N}(T_N).$$ 
By {\cite[Theorem 6.3]{gam16}} and {\cite[Corollary 1.2]{gam16}}, $T$ is polynomially bounded, 
and $T$ is not similar to a contraction. 

Let  $\eta_{N,w_N}$ be  from  Theorem~\ref{theoremnew}. Then \eqref{incb} is fulfilled for every $N$  with $w=w_N$. 
By \eqref{eq3.6},  \eqref{incb}, and {\cite[Theorem II.C.3.2.14, p. 164]{nik}}, there exists a function  $\eta\in H^\infty$ 
such that $\eta-\eta_{N,w_N}\in (B_N\circ\beta_{w_N})H^\infty$ for every $N$. We have 
$$\omega_a-g_\alpha\eta=\omega_a-g_\alpha\eta_{N,w_N}+g_\alpha(\eta_{N,w_N}-\eta)\in (B_N\circ\beta_{w_N}) H^\infty$$ for every $N$ due to \eqref{incb}. Thus,  \eqref{eqnew} is proved.

Equalities \eqref{eqtheorem7.1} are proved exactly as in the proof of {\cite[Theorem 7.1]{gam16}}.
\end{proof}

\begin{theorem}\label{example} Let $a>0$, and let   $\omega_a$ be defined in \eqref{omegag}. 
 Then there exists a cyclic a.c.  polynomially bounded operator $T_0$ such that $T_0$ is not similar to a contraction and $\omega_a(T_0)=\mathbb O$.\end{theorem}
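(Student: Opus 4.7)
My plan is to apply Theorem~\ref{theorem7.1} to obtain a polynomially bounded operator $T$ that is not similar to a contraction, lift it via Theorem~\ref{theorem1} to a larger operator $\widetilde T$, and take $T_0$ to be the compression of $\widetilde T$ to the orthogonal complement of the invariant subspace $\mathcal M$ supplied by Theorem~\ref{theorem1}.

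Fix any $0<\alpha<1$. Theorem~\ref{theorem7.1} produces $T\colon\mathcal H\to\mathcal H$ polynomially bounded but not similar to a contraction, a Blaschke product $B$ with zeros in $(0,1)$, $\eta,h\in H^\infty$ with $\omega_a=g_\alpha\eta+Bh$, and quasiaffinities $X\colon\mathcal H\to\mathcal K_B$, $Y\colon\mathcal K_B\to\mathcal H$ satisfying \eqref{eqtheorem7.1}. I would then apply Theorem~\ref{theorem1} with $\theta:=B$, $\omega:=\omega_a$, $g:=g_\alpha$, and the roles of $T_0,X_0,Y_0$ of that theorem played by $T,Y,X$ (the hypothesis $Y_0X_0=g(T_\theta)$ becomes $XY=g_\alpha(T_B)$). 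This produces $\widetilde T$ on $\widetilde{\mathcal H}:=BH^2\oplus\mathcal H$ given by \eqref{deft}, operators $\widetilde X\colon H^2\to\widetilde{\mathcal H}$ (from \eqref{defx}) and $\widetilde Y:=I_{BH^2}\oplus X$, and the invariant subspace $\mathcal M:=\operatorname{clos}\widetilde X\omega_aH^2$ with $\widetilde T|_{\mathcal M}\approx S$ and $\widetilde Y\mathcal M=\omega_aH^2$ (since the inner factor of the outer function $g_\alpha$ is $1$). A direct computation based on the identity $SX=XT+P_{BH^2}SX$ (derived from $XT=T_BX=P_{\mathcal K_B}SX$) yields $p(\widetilde T)(u\oplus f)=(p(S)u+P_{BH^2}p(S)Xf)\oplus p(T)f$, so $\widetilde T$ is polynomially bounded; Lemma~\ref{acperp} applied to the invariant subspace $BH^2$ gives absolute continuity; and $\widetilde X$ has dense range (the second component $YP_{\mathcal K_B}u$ is dense in $\mathcal H$, while with that component fixed the first $P_{BH^2}(g_\alpha u)$ is dense in $BH^2$ because $g_\alpha$ is outer) and intertwines $S$ with $\widetilde T$, so $\mu_{\widetilde T}\leq 1$, and both $\widetilde X,\widetilde Y$ are quasiaffinities, giving $\widetilde T\sim S$.

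Define $T_0:=P_{\mathcal M^\perp}\widetilde T|_{\mathcal M^\perp}$. The block form of $\widetilde T$ in the decomposition $\widetilde{\mathcal H}=\mathcal M\oplus\mathcal M^\perp$ gives $\|p(T_0)\|\leq\|p(\widetilde T)\|$, so $T_0$ is polynomially bounded; Lemma~\ref{acperp} gives absolute continuity, and \eqref{muorth} gives $\mu_{T_0}\leq 1$. For $\omega_a(T_0)=\mathbb O$ I would apply Proposition~\ref{propa1} to the quadruple $(\widetilde T,S,\widetilde Y,\mathcal M)$---the required left invertibility of $\widetilde Y|_{\mathcal M}$ is exactly \eqref{estxm}, established inside the proof of Theorem~\ref{theorem1}. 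The proposition supplies a quasiaffinity $\widetilde Y_0:=P_{\mathcal K_{\omega_a}}\widetilde Y|_{\mathcal M^\perp}\colon\mathcal M^\perp\to\mathcal K_{\omega_a}$ with $\widetilde Y_0T_0=T_{\omega_a}\widetilde Y_0$; functional calculus gives $\widetilde Y_0\omega_a(T_0)=\omega_a(T_{\omega_a})\widetilde Y_0=\mathbb O$, and injectivity of $\widetilde Y_0$ forces $\omega_a(T_0)=\mathbb O$.

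The hardest step is ruling out similarity of $T_0$ to a contraction. A preliminary observation: $\widetilde T$ itself is not similar to a contraction. Indeed, $BH^2\in\operatorname{Lat}\widetilde T$ so $\mathcal H=(BH^2)^\perp$ is $\widetilde T^*$-invariant and the block form of $\widetilde T^*$ gives $\widetilde T^*|_{\mathcal H}=T^*$; if $\widetilde T\approx C$ for a contraction $C$, then $\widetilde T^*\approx C^*$ and the $\widetilde T^*$-invariant subspace $\mathcal H$ corresponds under this similarity to a $C^*$-invariant subspace, making $T^*$ similar to a restriction of $C^*$, whence $T$ is similar to a compression of $C$, a contraction, contradicting Theorem~\ref{theorem7.1}. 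Now suppose for contradiction $T_0\approx C_0$ for some contraction $C_0$. Since $\omega_a(T_0)=\mathbb O$ and $T_0$ is cyclic, $C_0$ is a cyclic $C_0$-contraction; combining the quasiaffinity $\widetilde Y_0$ with \cite{bercpr} and {\cite[Theorem~X.5.7]{sfbk}} (as in Lemma~\ref{lem0}) yields $T_0\sim T_{\omega_a}$, and uniqueness of cyclic $C_0$-contractions with a prescribed minimal function forces $T_0\approx T_{\omega_a}$. Feeding this similarity into Proposition~\ref{propa1} and invoking Sarason's commutant theorem on $T_{\omega_a}$ identifies $\widetilde T$ up to similarity with $A_f=\begin{pmatrix}S|_{\omega_aH^2}&V_2f(T_{\omega_a})\\\mathbb O&T_{\omega_a}\end{pmatrix}$ on $H^2=\omega_aH^2\oplus\mathcal K_{\omega_a}$ for some $f\in H^\infty$ whose inner factor is coprime to $\omega_a$. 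The decisive step, which I expect to be the main technical difficulty, is to show that $A_f$ is itself similar to a contraction (ideally to $S$); this reduces to solving the Sylvester equation $SL-LT_{\omega_a}=V_2(f(T_{\omega_a})-I)$ in a bounded $L\colon\mathcal K_{\omega_a}\to\omega_aH^2$, using the structure of $f$ inherited from the outer factor $g_\alpha$ and the Blaschke data arranged in Theorems~\ref{theoremnew} and \ref{theorem3.8}. Once that is established, $\widetilde T$ is similar to a contraction, contradicting the preceding observation.
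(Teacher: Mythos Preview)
Your construction of the operator $T_0$ coincides with the paper's: apply Theorem~\ref{theorem7.1}, build the triangular $\widetilde T$ via \eqref{deft}, take $\mathcal M=\operatorname{clos}\widetilde X\omega_aH^2$, and set $T_0=P_{\mathcal M^\perp}\widetilde T|_{\mathcal M^\perp}$. Your verifications that $\widetilde T$ is polynomially bounded, a.c., cyclic, that $\widetilde T$ is not similar to a contraction, and that $\omega_a(T_0)=\mathbb O$ are all correct (the paper obtains the same facts slightly differently, citing \cite[Corollary~2.3]{gam16} for the first three and using the identity $\mathcal M=\operatorname{clos}\omega_a(\widetilde T)\widetilde{\mathcal H}$ for the last).

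The genuine gap is in your argument that $T_0$ is not similar to a contraction. Two problems compound there. First, the inference ``$T_0\sim T_{\omega_a}$ and $T_0$ is similar to a cyclic $C_0$-contraction, hence $T_0\approx T_{\omega_a}$'' is not valid: the Jordan model classifies cyclic $C_0$-contractions only up to quasisimilarity, and there is no ``uniqueness up to similarity'' statement available here. Second, even granting $\widetilde T\approx A_f$ with $A_f=\begin{pmatrix}S|_{\omega_aH^2}&V_2f(T_{\omega_a})\\ \mathbb O&T_{\omega_a}\end{pmatrix}$, you yourself flag the similarity of $A_f$ to a contraction as the ``main technical difficulty'' and leave it open. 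All you know about $f$ is that $f(T_{\omega_a})=\widetilde Y_0W^{-1}$ is a quasiaffinity; this gives no control over invertibility of $f(T_{\omega_a})$ or solvability of the Sylvester equation you write down, and there is no evident link between this $f$ and the outer factor $g_\alpha$ or the Blaschke data from Theorems~\ref{theoremnew} and \ref{theorem3.8}. So the contradiction is not closed.

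The paper avoids this entire line by invoking \cite[Corollary~4.2]{cass}: if $T_0=P_{\mathcal M^\perp}\widetilde T|_{\mathcal M^\perp}$ were similar to a contraction, then since $\widetilde T|_{\mathcal M}\approx S$ is similar to an isometry and $\widetilde T$ is power bounded, Cassier's result forces $\widetilde T$ itself to be similar to a contraction, contradicting what you already established. That single citation replaces your incomplete final paragraph.
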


\begin{proof} Take $0<\alpha<1$, and apply Theorem~\ref{theorem7.1} to $a$ and $\alpha$. Denote the 
polynomially bounded operator and quasiaffinities constructed in  Theorem~\ref{theorem7.1} by $T_{10}$, $X_{10}$, and $Y_{10}$, respectively. Define $X$ and $T$ by \eqref{defx} and \eqref{deft},  respectively, with $\theta=B$, where $B$ is the Blaschke product obtained in  Theorem~\ref{theorem7.1}. By {\cite[Corollary 2.3]{gam16}}, $T$ is polynomially bounded, and $T$ is not similar to a contraction. Set $Y=I_{BH^2}\oplus Y_{10}$. Clearly, $Y$ is a  quasiaffinity. Since $g_\alpha$ is outer, $X$  is a  quasiaffinity (sf. {\cite[Lemma 2.1 and Proposition 2.4]{gam16}}). Taking into account  \eqref{xyt}, we obtain that  $T\sim S$. Therefore, $T$ is cyclic, and, by \cite{mlak} or {\cite[Proposition 16]{ker16}}, $T$ is a.c.. 

Put $$\mathcal M=\operatorname{clos}X\omega_a H^2.$$ By Theorem~\ref{theorem1}, $T|_{\mathcal M}\approx S$, and 
$Y\mathcal M=\omega_a H^2$ (because $g_\alpha$ is outer). 
Put $$T_0=P_{\mathcal M^\perp}T| _{\mathcal M^\perp}.$$
Then $T_0$ is cyclic, because $T$ is cyclic (see \eqref{muorth}), and $T_0$ is a.c., because $T$ is a.c. (\cite{mlak} or {\cite[Propositions 14 and 35]{ker16}}). It is easy to see that $\omega_a(T_0)=\mathbb O$. Indeed, denote by 
$\mathcal H$ the space in which $T$ acts. We have
$$\mathcal M=\operatorname{clos}X\omega_a H^2=\operatorname{clos}X\omega_a(S) H^2=
\operatorname{clos}\omega_a(T)X H^2=\operatorname{clos}\omega_a(T)\mathcal H.$$
Therefore,  $\omega_a(T_0)=P_{\mathcal M^\perp}\omega_a(T)| _{\mathcal M^\perp}=\mathbb O$. 

Finally, $T_0$ is not similar to a contraction by {\cite[Corollary 4.2]{cass}}. Indeed, if $T_0$ is similar  to a contraction, we can apply {\cite[Corollary 4.2]{cass}} to $T$, because $T|_{\mathcal M}$ is similar to an isometry, and $T$ is power bounded. Then $T$ must be  similar  to a contraction, a contradiction.\end{proof}

.

\end{document}